\theoremstyle{definition} 
\newtheorem{theorem}{Theorem}[section]
\newtheorem{example}{Example}[section]
\newtheorem{rmk}[theorem]{Remark}
\newtheorem{lemma}[theorem]{Lemma}
\theoremstyle{definition}
\newtheorem{definition}{Definition}[section]
\theoremstyle{remark}
\newtheorem*{remark}{Remark}
\def\hroot{\tilde{\alpha}}
\def\a{\alpha}
\def\v{\textbf{v}}
\def\x{\textbf{x}}
\definecolor{LightCyan}{rgb}{0.88,1,1}
\newcommand{\supp}[1]{\mathrm{supp}#1}
\title{Broken Bracelets and Kostant's Partition Function}
\author{Mark Curiel}
\address[M.\ Curiel]{Department of Mathematics\\University of Hawai`i M\={a}noa
}
\email[M.\ Curiel]{\textcolor{blue}{\href{mailto:curielm@hawaii.edu}{curielm@hawaii.edu}}}
\author{Elizabeth Gross}
\address[E.\ Gross]{Department of Mathematics\\University of Hawai`i M\={a}noa
}
\email{\textcolor{blue}{\href{mailto:egross@hawaii.edu}{egross@hawaii.edu}}}
\author{Pamela E. Harris}
\address[P.\ E.\ Harris]{Department of Mathematics and Statistics\\Williams College\\
\url{https://www.pamelaeharris.com/}}
\email[P.~E.~Harris]{\textcolor{blue}{\href{mailto:peh2@williams.edu}{peh2@williams.edu}}}
\newcommand{\nbinom}[2]{\binom{#1}{#2}_{\mathfrak{R}}}
\DeclareMathOperator{\Star}{Star}
\newenvironment{brsm}{\left[\begin{smallmatrix}}{\end{smallmatrix}\right]}
\newcommand*{\encircled}[1]{\relax\ifmmode\mathpalette\@encircled@math{#1}\else\@encircled{#1}\fi}
\newcommand*{\@encircled@math}[2]{\@encircled{$\m@th#1#2$}}
\newcommand*{\@encircled}[1]{%
  \tikz[baseline,anchor=base]{\node[draw,circle,outer sep=0pt,inner sep=.2ex] {#1};}}
\begin{document}

\maketitle

\begin{abstract}
    Inspired by the work of Amdeberhan, Can, and Moll on broken necklaces, we define a broken bracelet as a linear arrangement of marked and unmarked vertices and introduce a generalization called $n$-stars, which is a collection of $n$ broken bracelets whose final (unmarked) vertices are identified. 
    Through these combinatorial objects, we provide a new  framework  for  the  study  of  Kostant's  partition  function, which counts the number of ways to express a vector as a nonnegative integer linear combination of the positive roots of a Lie algebra.  
    Our main result establishes that (up to reflection) the number of broken bracelets with a fixed number of unmarked vertices with nonconsecutive marked vertices gives an upper bound for the value of Kostant's partition function for multiples of the highest root of a Lie algebra of type $A$. We connect this work to multiplex juggling sequences, as studied by Benedetti, Hanusa, Harris, Morales, and Simpson, by providing a correspondence to an equivalence relation on $n$-stars.
\end{abstract}

\section{Introduction}
A vector partition function problem can be stated as follows:
Let $A$ be an $m\times d$ integral matrix. Then for ${\bf{b}}$ in the nonnegative linear span of the columns of $A$, we want to compute 
the vector partition function
\begin{equation} \label{eq:vectorpartitionfunction}
    \wp_A({\bf{b}})=\#\{{\bf{x}}\in\mathbb{N}^d\;:\;A{\bf{x}}={\bf{b}}.\}
\end{equation}
In other words, $\wp_A(\bf{b})$ gives the number of ways one can express $\bf{b}$ as a nonnegative integral linear combination of the columns of $A$ and each way we can do this is referred to as a partition of $\bf{b}$.

The study of vector partition functions arises within many contexts in mathematics. Their study appears in the literature in the context of number theory  via the study of integer partitions, which is the special case where $A$ is an $1 \times n$ matrix and the entries of $A$ are the integers allowed as parts in the partitions.
Other contexts where vector partitions arise include commutative algebra  via the study of Hilbert series \cite{miller2004combinatorial},
algebraic geometry via toric varieties \cite{fulton2016introduction}, algebraic statistics via goodness-of-fit testing for log-linear models \cite{diaconis1998algebraic}, discrete geometry via the study of integer lattice point enumeration in polyhedra \cite{barvinok1994polynomial} \cite{DELOERA20041273}, and optimization via integer programming\cite{de2019algebraic}. 

A vector partition function also arises within the context of representation theory of Lie  algebras. In setting, the vector partition function is known  as Kostant's partition function and it counts the number of ways to express a weight $\mu$ of a simple Lie algebra $\mathfrak{g}$ as a nonnegative integer linear combination of the positive roots of $\mathfrak{g}$. This partition function appears as the (signed) terms in the computation of weight multiplicities for irreducible representations of classical simple Lie algebras via the use of Kostant's weight multiplicity formula \cite{Kostant}. 
We remark that Kostant's partition function also arises in the context of polyhedral geometry via the enumeration of integer lattice points of flow polytopes \cite{flow,flow2}.

Finding closed formulas for the value of Kostant's partition function remains a very active field of study and recent work has connected vector partitions to multiplex juggling sequences \cite{Juggling}. Inspired by that work, we provide a new framework for the study of Kostant's partition function via a connection to a new set of combinatorial objects called \textit{broken bracelets}, defined following the conventions of Amdeberhan, Can, and Moll \cite{ACM},
and their generalizations, which we call \textit{$n$-stars}. 
In this setting, a broken bracelet is a sequence of marked and unmarked vertices where no two marked vertices are adjacent and reflections are identified. 
We first consider the vector defined as twice the highest root of a Lie algebra of type $A_r$, which we denote by $2\tilde\alpha$, and our results in Section \ref{sec:background} provide a way to associate broken bracelets to vector partitions of $2\tilde\alpha$ using the positive roots of a Lie algebra of type $A_r$. 
From this we establish that the number of broken bracelets gives an upper bound to the number of vector partitions $2\tilde\alpha$.

With these initial results at hand, in Section \ref{sec:special case}, we give an equivalence relation on broken bracelets, which we extend to $n$-stars in Section \ref{sec:main}. 
From this we can establish that the equivalence classes, arising from this relation, are in bijection with the set of partitions of $n\tilde\alpha$ for all $n\geq 2$. A key insight in this work is the fact that the equivalence relation we define extends the notion of reflection on broken bracelets. 
The paper culminates, in Section \ref{sec: juggling}, by providing a connection between our equivalence relation on $n$-stars and multiplex juggling sequences as studied by Benedetti, Hanusa, Harris, Morales, and Simpson in \cite{Juggling}.

\section{Background}\label{sec:background}
In this section we begin by setting some notation regarding our main objects of study: Kostant's partition function and broken bracelets.

\subsection{Kostant's partition function}
Kostant's partition function is a vector partition function in which the columns defining matrix $A$ in Equation \eqref{eq:vectorpartitionfunction} are the positive roots of a simple Lie algebra. 
In this work, we specialize to the Lie algebra of type $A_r$ with $r$ being a positive integer. In this case, we let $e_i \in \mathbb{R}^{r+1}$ be the $i$th standard unit vector, and then, following the notation in \cite{GW}, the set of simple roots is $\Delta=\{\a_i=e_i-e_{i+1}:1\leq i\leq r\}$ and the set of positive roots is given by $\Phi^{+}=\Delta\cup \{\a_i+\cdots+\a_{j}=e_i-e_{j+1}:1\leq i<j\leq r\}$. 
Thus, the matrix $A$ has as its columns the set $\Phi^+$, and we let $\wp_A(\textbf{b})$ count the number of ways to express the vector $\textbf{b}$ as nonnegative integral linear combination of the elements in $\Phi^+$. 
As this is the matrix we utilize throughout, to simplify our notation, we drop the subscript and let the partition function be denoted by $\wp(\textbf{b})$.
The main object of interest in our study is the partitions of the highest root of the Lie algebra of type $A_r$, which is defined as $\hroot=\a_1+\cdots+\a_{r}=e_1-e_{r+1}$.  As an example, consider $r=2$. Then $\wp(2\hroot)=3$ as we could write $2\hroot=2\a_1+2\a_2$ as a sum of the elements in the multisets $\{\alpha_1+\alpha_2,\alpha_1+\alpha_2\}$,  $\{\a_1,\a_2,\a_1+\a_2\}$, or $\{\a_1,\a_1,\a_2,\a_2\}$.

We now introduce an additional set of vectors in order to simplify our  computations, as well as to illustrate the connection between vector partitions and broken bracelets.
For integers $1\leq i \leq j\leq r$, let $E_{ij} = \sum_{k=i}^j e_k$ and define $V_r = \{E_{ij} : 1 \leq i \leq j \leq r\}$. Note that $E_{ii} = e_i$, so in this case we keep the simpler index and write $E_i$ instead.
Given a vector $\v\in\mathbb{R}^r$, if $\v$ can be expressed as a nonnegative integer linear combination of the vectors in $V_r$, we call such an expression (up to reordering of terms) a $V_r$-combination; equivalently, a $V_r$-combination is a multiset with ground set $V_r$. From this we can define a new vector partition function, which we denote $P:\mathbb{R}^r\to\mathbb{N}$, such that $P(\v)$ gives the number of ways to express $\v$ as a $V_r$-combination.

For the remainder of this note, let ${\bf{x}} = \sum_{i=1}^r e_i=\sum_{i=1}^r E_i$. 
Now we consider the case when $r=2$, and note that $P(2\x)=2$ since $2\x$ can be written as the  $V_2$-combination where we take the sums of the elements in the multisets $\{E_{12},E_{12}\}$, $\{E_{1},E_{2},E_{12}\}$, or $\{E_1,E_1,E_2,E_2\}$. It is no coincidence that this is precisely the value of $\wp(2\hroot)$. 
In fact, for any $n\in\mathbb{N}$ the number of ways to express $n{\bf{x}}$ as a $V_r$-combination is precisely the value of $\wp(n\hroot)$ using the positive of the Lie algebra of type $A_{r-1}$. This is our first result.

\begin{lemma}
If $r\geq 1$, then $\wp(n\hroot)=P(n\text{\x})$.
\end{lemma}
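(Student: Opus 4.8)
The plan is to prove the equality by exhibiting an explicit linear map that simultaneously identifies the positive roots of type $A_r$ with the vectors of $V_r$ and sends $n\hroot$ to $n\x$, and then to check that this identification upgrades to a bijection between the two collections of partitions. First I would introduce the linear map $\psi\colon\mathbb{R}^{r+1}\to\mathbb{R}^r$ determined on the standard basis by $\psi(e_m)=\sum_{k=m}^{r}e_k$ for $1\le m\le r$ and $\psi(e_{r+1})=0$ (the empty sum). A short telescoping computation then gives $\psi(e_i-e_{j+1})=\sum_{k=i}^{r}e_k-\sum_{k=j+1}^{r}e_k=\sum_{k=i}^{j}e_k=E_{ij}$ for all $1\le i\le j\le r$, so $\psi$ sends the positive root $\a_i+\cdots+\a_j=e_i-e_{j+1}$ to $E_{ij}$. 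Since $(i,j)\mapsto E_{ij}$ and $(i,j)\mapsto e_i-e_{j+1}$ index $V_r$ and $\Phi^{+}$ by the same set of pairs, $\psi$ restricts to a bijection $\Phi^{+}\to V_r$; moreover $\psi(\hroot)=\psi(e_1-e_{r+1})=\x$, and hence $\psi(n\hroot)=n\x$.

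With $\psi$ in hand, I would build the bijection on partitions. In the forward direction, a partition of $n\hroot$ is a multiset of positive roots summing to $n\hroot$; applying $\psi$ termwise produces a multiset of $V_r$-vectors whose sum is $\psi(n\hroot)=n\x$, i.e.\ a $V_r$-combination of $n\x$. In the reverse direction, a $V_r$-combination of $n\x$ is a multiset of vectors $E_{ij}$, which I would pull back by replacing each $E_{ij}$ with the unique positive root $e_i-e_{j+1}$ sent to it under $\psi$; this yields a multiset of positive roots with some sum $w$ satisfying $\psi(w)=n\x$. These two assignments are visibly inverse to one another as operations on multisets, so the only thing left to verify is that the pulled-back multiset really is a partition of $n\hroot$, i.e.\ that $w=n\hroot$.

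This last point is where I expect the only genuine obstacle to lie, since $\psi$ is not injective on all of $\mathbb{R}^{r+1}$: its kernel is the line spanned by $e_{r+1}$, so a priori $\psi(w)=n\x=\psi(n\hroot)$ only forces $w\in n\hroot+\mathbb{R}e_{r+1}$. The resolution is to restrict attention to the root hyperplane $H=\{v\in\mathbb{R}^{r+1}:\sum_i v_i=0\}$. Every positive root lies in $H$, so $w\in H$, and also $n\hroot\in H$. Because $e_{r+1}\notin H$, the kernel $\mathbb{R}e_{r+1}$ meets $H$ trivially, so $\psi|_H\colon H\to\mathbb{R}^r$ is an injective linear map between $r$-dimensional spaces, hence an isomorphism. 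Injectivity of $\psi|_H$ together with $\psi(w)=\psi(n\hroot)$ then forces $w=n\hroot$, completing the reverse direction and establishing the bijection, so that $\wp(n\hroot)=P(n\x)$. The edge case $r=1$ is immediate, as both sides equal $1$.
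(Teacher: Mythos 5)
Your proof is correct, and at its core it rests on the same correspondence the paper uses: the positive roots $e_i-e_{j+1}$ of type $A_r$ and the vectors $E_{ij}$ of $V_r$ are indexed by the same pairs $(i,j)$, and this bijection is extended termwise to multisets. The difference is in how the sum condition is handled. The paper exhibits the bijection $\gamma\colon V_r\to\Phi^+$, $E_{ij}\mapsto \a_i+\cdots+\a_j$, and then asserts that its extension to multisets is invertible, leaving implicit the key point that a multiset of positive roots sums to $n\hroot$ if and only if the corresponding multiset of $E_{ij}$'s sums to $n\x$. Your packaging of the bijection as the restriction of the linear map $\psi$ makes the forward half of this equivalence automatic by linearity, and your kernel/hyperplane argument is precisely what is needed to close the reverse half: since $\ker\psi=\mathbb{R}e_{r+1}$ meets the trace-zero hyperplane $H$ trivially and every positive root lies in $H$, the equation $\psi(w)=n\x$ does force $w=n\hroot$. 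So the two arguments prove the same statement by the same underlying bijection, but yours makes explicit, and verifies, a step the paper glosses over; that extra care is the only genuine difference. One bookkeeping remark for when you compare with the source: the paper's surrounding prose speaks of the Lie algebra of type $A_{r-1}$, but its own map $\gamma$ (like your $\psi$) pairs $V_r$ with the $\binom{r+1}{2}$ positive roots of type $A_r$, and the latter indexing is the one consistent with both the definition $\hroot=e_1-e_{r+1}$ and your argument.
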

\begin{proof}
It suffices to show that there is a bijection between the set of partitions of $n\hroot$ using the positive roots of the Lie algebra of type $A_{r-1}$ and the set of $V_r$-combinations of the vector $n\x$.

We begin by establishing that the map $\gamma:V_r\to\Phi^+$ defined by 
$E_i\mapsto \a_i$ and $E_{ij}\mapsto\a_i+\cdots+\a_j$ is a bijection. 
We note that $\gamma$ is onto since for any $\a_i$, where $1\leq i\leq r$, there exists $E_i\in V_r$ such that $\gamma(E_i)=\a_i$  and, additionally, for any $\a_i+\cdots+\a_j$, with $1\leq i<j\leq r$, there exists $E_{ij}\in V_r$ such that $\gamma(E_{ij})=\a_i+\cdots+\a_j$. 
To establish injectivity, consider $E_{ij}$ (with $1\leq i\leq j\leq r$) and $E_{mn}$ (with $1\leq m\leq n\leq r$) in $V_r$ such that $\gamma(E_{ij})=\gamma(E_{mn})$. 
If $i< j$ and $m< n$, then $\gamma(E_{ij})=\gamma(E_{mn})$ implies that $\a_i+\cdots+\a_j=\a_m+\cdots+\a_n$. 
Thus, $i=m$ and $j=n$ since the sums are consecutive in the index and hence $E_{ij}=E_{mn}$. 
In the case that $i=j$ and $m\leq n$, then $\gamma(E_{ij})=\gamma(E_{mn})$ implies that $\a_i=\a_m+\cdots+\a_n$. 
Thus $m=n=i$ and again $E_{ij}=E_{mn}$. The remaining case where $i\leq j$ and $m=n$ is analogous. 
Therefore $\gamma$ is a bijection. 

The importance of $\gamma$ being a bijection is that we can now extend this bijection to any multiset consisting of the vectors used in the partitions of $n\hroot$ and take them to a $V_r$-combination of $n\x$. This new map is also invertible, and hence it implies that $\wp(n\hroot)=P(n\text{\x})$. 
\end{proof}

\begin{rmk}\label{rmk:notation} To highlight the importance of the consecutive sums of vectors in the set $V_r$, it will be convenient to make the following identifications. If $\supp(e_i) = \{i\}$, then $\supp(E_{ij}) = \{i,\ldots,j\}$. Thus, we identify {$E_i$ with the integer $i$ and} $E_{ij}$ with the consecutive sequence of integers from $i$ to~$j$. For instance, $E_{24} = 234$ and $E_5 = 5$. In this way, we identify $V_r$-combinations with consecutive sequences of integers separated by a dash. For example, if $r=4$, then $2E_{1} + E_{23} + E_{24} + E_{4}$ is identified with 1-1-23-234-4. The identification is not unique since 23-1-4-1-234 represents the same $V_r$-combination of $2{\bf{x}}$.
\end{rmk}

\begin{lemma}\label{split}
Let $r$ be a positive integer and let $2{\bf{x}} = \sum_{v \in V_r} c_v v$ where $c_v \in \mathbb{Z}_{\geq0}$ for each $v \in V_r$. Then there exist subsets $V_1,V_2 \subseteq V_r$ and nonnegative integers $a_v,b_v$ for each $v\in V_r$ such that ${\bf{x}} = \sum_{v \in V_1} a_vv = \sum_{v \in V_2} b_vv$ with $a_v + b_v = c_v$ for each $v \in V_r$. 
\end{lemma}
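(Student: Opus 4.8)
The plan is to reinterpret the statement entirely in terms of the interval picture of Remark~\ref{rmk:notation}. Since $\mathbf{x}=\sum_{i=1}^r e_i$ is the all-ones vector, writing $2\mathbf{x}=\sum_{v\in V_r}c_v v$ amounts to choosing a multiset of ``intervals'' $E_{ij}$ (consecutive blocks $\{i,\ldots,j\}\subseteq\{1,\ldots,r\}$) in which every point $k\in\{1,\ldots,r\}$ is covered by exactly two blocks. The desired conclusion says precisely that this multiset can be partitioned into two sub-multisets, each covering every point exactly once; equivalently, each sub-multiset is an exact cover of $\{1,\ldots,r\}$ by pairwise disjoint consecutive blocks, hence a $V_r$-combination of $\mathbf{x}$. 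So everything reduces to splitting a ``double cover by intervals'' into two ``single covers.''

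To produce the split I would model the blocks as a directed graph. Place a vertex at each boundary $0,1,\ldots,r$ and turn each block $E_{ij}$ into a directed edge $i-1\to j$ (one copy for each unit of the multiplicity $c_{E_{ij}}$). The key computation is a degree count: because the coverage is the constant $2$, on crossing the boundary $k$ the number of blocks ending at $k$ must equal the number starting at $k+1$, which translates into $\mathrm{indeg}(k)=\mathrm{outdeg}(k)$ for every internal vertex $1\le k\le r-1$; at the same time $\mathrm{outdeg}(0)=2$, $\mathrm{indeg}(0)=0$, $\mathrm{indeg}(r)=2$, and $\mathrm{outdeg}(r)=0$. Moreover every edge strictly increases the boundary coordinate, so the digraph is acyclic.

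With these degree conditions in hand, I would invoke the flow/Euler decomposition for such a digraph: its edge multiset decomposes into directed $0\to r$ paths together with directed cycles, and since acyclicity forbids cycles while the excess at the source is $2$, it splits into exactly two arc-disjoint directed paths from $0$ to $r$ whose union is the entire edge multiset. Reading a path $0=v_0\to v_1\to\cdots\to v_p=r$ back through the identification recovers the blocks $E_{v_0+1,v_1},E_{v_1+1,v_2},\ldots,E_{v_{p-1}+1,v_p}$, which tile $\{1,\ldots,r\}$ and therefore form a $V_r$-combination of $\mathbf{x}$. Taking $V_1$ and the $a_v$ from the first path and $V_2$ and the $b_v$ from the second, the fact that the two paths use every edge exactly once yields $a_v+b_v=c_v$ for all $v\in V_r$.

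I expect the only real obstacle to be making the decomposition step airtight rather than merely plausible: one must justify carefully that the degree balance at internal vertices follows from constant coverage, that parallel edges (repeated blocks) are handled correctly, and that acyclicity rules out leftover cycles so that exactly two source-to-sink paths result. If citing flow decomposition feels heavy, an equivalent self-contained route is a left-to-right greedy pairing: start the two covers with the two blocks through the point $1$, and repeatedly extend whichever cover currently reaches the smaller coordinate using a block that begins at the very next point. The work there is to maintain the invariant that each partial cover is an exact tiling of an initial segment $\{1,\ldots,m\}$ and that a suitable next block always exists; this invariant is exactly what the degree count guarantees.
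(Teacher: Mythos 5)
Your proposal is correct, but it proves the lemma by a genuinely different route than the paper. The paper's argument is elementary and local: it observes that since each coordinate of $2\mathbf{x}$ equals $2$, every position $i\in\{1,\dots,r\}$ is covered by exactly two blocks (counted with multiplicity), and it then sweeps left to right, placing the two blocks through position $i$ into $V_1$ and $V_2$ via a four-case analysis that keeps each partial set a single cover of the initial segment processed so far. This is essentially the ``self-contained greedy pairing'' you sketch as a fallback at the end of your writeup, so the paper in effect chooses your plan B. Your primary argument instead encodes each block $E_{ij}$ as a directed edge $i-1\to j$ on the boundary set $\{0,1,\dots,r\}$, verifies degree balance at internal vertices and excess $2$ at the source and sink, and invokes flow/path decomposition in an acyclic digraph to extract exactly two arc-disjoint $0\to r$ paths partitioning the edge multiset; each path reads back as a tiling of $\{1,\dots,r\}$ by consecutive blocks, i.e.\ a $V_r$-combination of $\mathbf{x}$. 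Your degree computation is right (blocks ending at $k$ must match blocks starting at $k+1$ because both coverages equal $2$), parallel edges correctly handle blocks with $c_v=2$, and acyclicity does rule out cycles in the decomposition, so the argument closes. What your approach buys is conceptual clarity and immediate generalizability: replacing $2$ by $n$ gives $n$ arc-disjoint source-to-sink paths with no extra work, which is exactly the extension the paper later asserts informally in the proof of its main theorem (``we can extend the proof of Lemma~\ref{split} so that an equivalent statement holds for $n\mathbf{x}$''); the paper's case analysis, by contrast, is tailored to the two-cover situation and would need to be rewritten for general $n$. The cost is that you must either cite the flow decomposition theorem or prove it, whereas the paper's greedy argument is fully self-contained.
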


\begin{proof} 
Let $r$ be a positive integer and let $2{\bf{x}} = \sum_{v \in V_r} c_v v$ where $c_v \in \mathbb{Z}_{\geq0}$. Then for each $1 \leq i \leq r$ either (i) there are exactly two vectors $v_i$ and $v_i'$ in $V_r$ such that $i \in \supp(v_i)$, $i \in \supp(v_i')$, and $c_{v_i}=c_{v_i'}=1$ or (ii) there is a single vector $v_i$ such that $i \in \supp(v_i)$, and $c_{v_i}=2$. If we let $v_i'=v_i$ when we are in case (ii), then
 we have a list of $2r$ (not necessarily distinct) vectors $v_1, v_1',\ldots,v_r,v_r'$ of $V_r$ such that $i$ belongs to the support of both $v_i$ and $v_i'$. Let $V_1 = \{v_1\}$ and $V_2 = \{v_1'\}$ and set $i=2$.
\begin{enumerate}
    \item If $v_i,v_i' \not\in V_1 \cup V_2$, then let $v_i \in V_1$ and $v_i' \in V_2$. Set $i=i+1$ and repeat this step.
    
    \item If $v_i \not\in V_1 \cup V_2$ and $v_i' \in V_1$ (or $v_i' \in V_2$), then let $v_i \in V_2$ ($v_i \in V_1$). Set $i=i+1$ and go to step one.
    
    \item If $v_i' \not\in V_1 \cup V_2$ and $v_i \in V_1$ (or $v_i \in V_2$), then let $v_i' \in V_2$ ($v_i' \in V_1$). Set $i=i+1$ and go to step one.
    
    \item If $v_i,v_i' \in V_1 \cup V_2$, then set $i=i+1$ and go to step one.
\end{enumerate}
We have a finite list of vectors so this process must terminate. The result is two subsets $V_1,V_2$ of $V_r$ with the desired properties.
\end{proof}

The importance of Lemma~\ref{split} is that it allows us to think about a $V_r$-combination of $2{\bf{x}}$ as a sum of two $V_r$-combinations of ${\bf{x}}$. In fact, this observation was a key insight in the proof of \cite[Corollary 3.9]{Juggling} by Steve Butler which established that the value of $\wp(n\hroot)$ is the same as the number of multiplex juggling sequences of length $r$ which start and end in configuration $\langle n\rangle$. We will say more about this in Section \ref{sec: juggling}. 

\subsection{Broken Bracelets}
Broken necklaces were introduced by Amdeberhan, Can, and Moll in \cite{ACM}. Here we use broken bracelet to refer to a specific type of broken necklace that is discussed, but left unnamed, in \cite{ACM}.

\begin{definition}
A \textit{broken bracelet} is a linear arrangement of $n$ vertices $v_1 \cdots v_n$ where $k$ vertices are non-consecutively marked. Let $g_k(n)$ be the number of broken bracelets up to reflection.
\end{definition}

Two bracelets $b = v_1 \cdots v_n$ and $b' = u_1 \cdots u_n$ are \textit{equal} if for each $i$, $v_i$ and $u_i$ are both marked or both unmarked; in this case, we write $b'=b$.

\begin{example}
Up to reflection, there are four broken bracelets with $n=5$ and $k=2$, and we illustrate them in Figure \ref{fig:g_2(5)} indicating marked vertices with red.
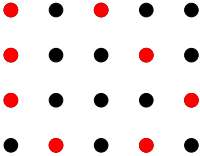
\begin{figure}[h]
    \centering
    \begin{tikzpicture}[scale=.6]
    \tikzmath{\c=.15;}
\foreach \n in {0,...,4}{
    \draw[fill=black] (\n,0) circle (\c);
}
\draw[red,fill=red] (0,0) circle (\c);
\draw[red,fill=red] (2,0) circle (\c);
\foreach \n in {0,...,4}{
    \draw[fill=black] (\n,-1) circle (\c);
}
\draw[red,fill=red] (0,-1) circle (\c);
\draw[red,fill=red] (3,-1) circle (\c);

\foreach \n in {0,...,4}{
    \draw[fill=black] (\n,-2) circle (\c);
}
\draw[red,fill=red] (0,-2) circle (\c);
\draw[red,fill=red] (4,-2) circle (\c);
\foreach \n in {0,...,4}{
    \draw[fill=black] (\n,-3) circle (\c);
}
\draw[red,fill=red] (1,-3) circle (\c);
\draw[red,fill=red] (3,-3) circle (\c);
\end{tikzpicture}
    \caption{The four  broken bracelets with $n=5$ and $k=2$, culminating in $g_2(5)=4$.}
    \label{fig:g_2(5)}
\end{figure}
\end{example}

Broken bracelets encode $V_r$-combinations, and we begin by illustrating this encoding with an example.

\begin{example}\label{ex:broken}  In order to arrive at a correspondence from bracelets to $V_r$-combinations, there needs to be a way to associate the vertices within a bracelet with vectors in a $V_r$-combination. Our goal is to associate the consecutive unmarked vertices with a single vector and have the marked vertices denote the separation between vectors.
Since, in this association, a broken bracelet will encode a $V_r$-combination of the vector 2\x,  Lemma \ref{split} implies the  broken bracelet encodes \textit{two} $V_r$-combinations of \x, and thus we need a mechanism to split a bracelet into two parts, one for each $V_r$-combination of \x.  Also note
that broken bracelets may begin or end in a marked vertex, in which case we need to decide how to handle such marked vertices. 
Before we give the technical process addressing these two issues, we illustrate the process below. 

Consider the broken bracelet in Figure~\ref{fig:broken bracelet} that has length $n=12$ and $k= 5$ marked vertices; this broken bracelet  encodes a $V_{5}$-combination of $2\x$ with $k+2 = 7$ parts.

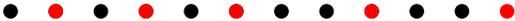
\begin{figure}[h]
    \centering
    \begin{tikzpicture}[scale = .6] 
    \tikzmath{\c=.15;}
\foreach \n in {0,...,10}{
\draw[fill=black] (\n,0) circle (\c);
}
\draw[red,fill=red] (1,0) circle (\c);
\draw[red,fill=red] (3,0) circle (\c);
\draw[red,fill=red] (5,0) circle (\c);
\draw[red,fill=red] (8,0) circle (\c);
\draw[red,fill=red] (11,0) circle (\c);
\end{tikzpicture}
    \caption{Broken bracelet for Example \ref{ex:broken}.}
    \label{fig:broken bracelet}
\end{figure}

To get to the $V_5$-combination, first append an unmarked vertex at the beginning and end of the bracelet, and color blue the ``central'' unmarked vertex, i.e.\ the unmarked vertex lying in the middle of the bracelet. The new bracelet we obtain is: \begin{tikzpicture}[scale = .5] 
\tikzmath{\d=.12;}
\foreach \n in {-1,...,12}{
\draw[fill=black] (\n,0) circle (\d);
}
\draw[red,fill=red] (1,0) circle (\d);
\draw[red,fill=red] (3,0) circle (\d);
\draw[red,fill=red] (5,0) circle (\d);
\draw[blue,fill=blue] (6,0) circle (\d);
\draw[red,fill=red] (8,0) circle (\d);
\draw[red,fill=red] (11,0) circle (\d);
\end{tikzpicture} . Next, we see that the central unmarked vertex is the eighth vertex and here is where we split the bracelet into two, keeping the central unmarked vertex at the end and at the beginning of the two new bracelets, respectively: \begin{tikzpicture}[scale = .5] 
\tikzmath{\d=.12;}
\foreach \n in {-1,...,6}{
\draw[fill=black] (\n,0) circle (\d);
}
\draw[red,fill=red] (1,0) circle (\d);
\draw[red,fill=red] (3,0) circle (\d);
\draw[red,fill=red] (5,0) circle (\d);
\draw[blue,fill=blue] (6,0) circle (\d);
\end{tikzpicture} \, and \, \begin{tikzpicture}[scale = .5] 
\tikzmath{\d=.12;}
\foreach \n in {6,...,12}{
\draw[fill=black] (\n,0) circle (\d);
}
\draw[blue,fill=blue] (6,0) circle (\d);
\draw[red,fill=red] (8,0) circle (\d);
\draw[red,fill=red] (11,0) circle (\d);
\end{tikzpicture} .  Now, for the former bracelet, each unmarked vertex gets an integer 1 though 5, increasing from left to right. And, for the latter bracelet, each vertex gets an integer 1 through 5, decreasing from left to right. Lastly, each marked vertex is associated to a dash. Thus, we get the $V_5$-combinations 12-3-4-5 and 54-32-1 of {\bf x} that when combined into 12-3-4-5-1-23-45 results in a $V_5$-combination of 2{\bf x}. Pictorially, we may represent this as follows, where the top bracelet is the original and the bottom is the modified bracelet in which we have added a marked vertex to denote where we split the bracelet:

\[
\begin{tikzpicture}[scale=.6]
\tikzmath{\c=.15;}
\foreach \n in {0,...,10}{
\draw[fill=black] (\n,0) circle (\c);
}
\draw[red,fill=red] (1,0) circle (\c);
\draw[red,fill=red] (3,0) circle (\c);
\draw[red,fill=red] (5,0) circle (\c);
\draw[red,fill=red] (8,0) circle (\c);
\draw[red,fill=red] (11,0) circle (\c);
\draw[blue,fill=blue] (6,0) circle (\c);

\foreach \n in {-2,...,13}{
\draw[fill=black] (\n,-2) circle (\c);
}
\draw[red,fill=red] (0,-2) circle (\c);
\draw[red,fill=red] (2,-2) circle (\c);
\draw[red,fill=red] (4,-2) circle (\c);
\draw[red,fill=red] (6,-2) circle (\c);
\draw[red,fill=red] (9,-2) circle (\c);
\draw[red,fill=red] (12,-2) circle (\c);
\draw[blue,fill=blue] (5,-2) circle (\c);
\draw[blue,fill=blue] (7,-2) circle (\c);
\draw[dashed] (6,.5) -- (4,-2.5);
\draw[dashed] (6,.5) -- (8,-2.5);
\draw[dashed] (0,.5) -- (-2,-2.5);
\draw[dashed] (11,.5) -- (13,-2.5);
\draw (-2,-3) node {\small $1$};
\draw (-1,-3) node {\small $2$};
\draw (0,-3) node {\small $-$};
\draw (1,-3) node {\small $3$};
\draw (2,-3) node {\small $-$};
\draw (3,-3) node {\small $4$};
\draw (4,-3) node {\small $-$};
\draw (5,-3) node {\small $5$};
\draw (6,-3) node {\small $-$};
\draw (7,-3) node {\small $5$};
\draw (8,-3) node {\small $4$};
\draw (9,-3) node {\small $-$};
\draw (10,-3) node {\small $3$};
\draw (11,-3) node {\small $2$};
\draw (12,-3) node {\small $-$};
\draw (13,-3) node {\small $1$};
\end{tikzpicture}
\]

Using this process, we are now able to establish the following theorem.

\end{example}

\begin{theorem}\label{upperbound}
Let $\x$ have length $r\geq 2$. Then $P(2\x)\leq \displaystyle\sum_{m=2}^{2r} g_{m-2}(2r+m-5)$.
\end{theorem}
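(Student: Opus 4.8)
The plan is to stratify the $V_r$-combinations of $2\x$ by their number of parts and, for each fixed number of parts $m$, to realize the encoding of Example~\ref{ex:broken} as a surjection from the broken bracelets with $m-2$ marked vertices and $2r+m-5$ vertices (taken up to reflection) onto the $V_r$-combinations of $2\x$ with exactly $m$ parts. Since each part of a $V_r$-combination of $2\x$ has support of size between $1$ and $r$ while the total support size (with multiplicity) is $2r$, the number of parts $m$ must satisfy $2\le m\le 2r$. Granting the surjection, we obtain
$$P(2\x)=\sum_{m=2}^{2r}\#\{V_r\text{-combinations of }2\x\text{ with }m\text{ parts}\}\le\sum_{m=2}^{2r}g_{m-2}(2r+m-5),$$
which is exactly the asserted bound.

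First I would make the encoding precise as a map $\Phi$ from broken bracelets to $V_r$-combinations of $2\x$: append an unmarked vertex at each end, split at the unique central unmarked vertex (duplicating it), label the unmarked vertices of the left piece $1,2,\dots,r$ from left to right and those of the right piece $r,r-1,\dots,1$ from left to right, and read off the two resulting compositions of $\{1,\dots,r\}$ as $V_r$-combinations of $\x$ whose multiset union is the output. I then track the bookkeeping. A bracelet counted by $g_{m-2}(2r+m-5)$ has $k:=m-2$ marked and therefore $2r-3$ unmarked vertices; after appending ($2r-1$ unmarked) and splitting the central vertex ($2r$ unmarked), one gets a configuration with $2r$ labeled vertices and $k+1$ marks (the $k$ original marks together with the split), hence a combination with $(k+1)+1=k+2=m$ parts. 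Because each half then carries exactly $r$ labels $1,\dots,r$, both compositions sum to $\x$, so $\Phi$ indeed lands in the combinations of $2\x$. Well-definedness on reflection classes is immediate: reflecting a bracelet interchanges the two pieces and the increasing/decreasing labelings, swapping the two compositions while leaving their multiset union fixed.

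Next I would prove surjectivity. Given a $V_r$-combination of $2\x$ with $m$ parts, Lemma~\ref{split} produces two subfamilies each summing to $\x$; since a $V_r$-combination of $\x$ is exactly a partition of $\{1,\dots,r\}$ into consecutive blocks, these are two compositions $C_1,C_2$ whose union is the given combination. Running the construction backward, I build the ``doubled'' bracelet whose label pattern is $1,\dots,r$ with internal marks at the boundaries of $C_1$, then a central split mark, then $r,\dots,1$ with internal marks at the boundaries of $C_2$, and recover a genuine broken bracelet by deleting the two extreme (label $1$) vertices and merging the two central label-$r$ vertices (removing the split mark and one copy). Because every block is nonempty, this deletion-and-merge never produces two adjacent marks, so the result is a valid broken bracelet; the count above shows it has $m-2$ marked vertices and $2r+m-5$ vertices, and re-encoding it returns $C_1,C_2$ and hence the original combination. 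Thus $\Phi$ is onto.

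The main obstacle is the reconstruction step: I must verify that deleting the end vertices and collapsing the central pair always yields a legal broken bracelet with non-consecutive marks, that the parity of the unmarked count ($2r-1$ is odd after appending) forces a unique central vertex which the merge restores, and that re-encoding genuinely recovers $C_1$ and $C_2$ rather than some shifted relabeling. It is precisely here that the inequality, rather than an equality, appears: the decomposition furnished by Lemma~\ref{split} is in general not unique, so several bracelets may encode the same combination and $\Phi$ need not be injective. Summing the per-$m$ surjections over $2\le m\le 2r$ then yields the stated upper bound.
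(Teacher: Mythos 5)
Your proposal is correct and follows essentially the same route as the paper: stratify by the number of parts $m$, realize the encoding of Example~\ref{ex:broken} as a well-defined map on reflection classes of bracelets in each stratum, and prove surjectivity by running the construction backward on the two $\x$-summands supplied by Lemma~\ref{split}. You supply more of the bookkeeping (vertex counts, the parity argument for the central vertex, and the check that the deletion-and-merge yields non-consecutive marks) than the paper does, but the underlying argument is identical.
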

\begin{proof}
We can consider the process described in Example \ref{ex:broken} as a map from the set of broken bracelets with $2r+m-5$ vertices and $m-2$ marked vertices up to reflection to the set of $V_r$-combinations consisting of exactly $m$ vectors. This map is well-defined since if we reflect a broken bracelet and apply the process described, we get the same $V_r$-combination. Furthermore, this map is surjective.  Let $\sum_{v \in V_r} c_v v$ be a $V_r$-combination of $2{\bf{x}}$ with $\sum_{v \in V_r} c_v = m$.  By Lemma \ref{split}, there are subsets $V_1$ and $V_2$ such that ${\bf{x}} = \sum_{v \in V_1} a_vv = \sum_{v \in V_2} b_vv$ with $a_v + b_v = c_v$ for each $v \in V_r$. Each of the sets $V_1$ and $V_2$ give a $V_r$-combination of ${\bf{x}}$. Write these $V_r$-combinations of ${\bf{x}}$ increasingly using the notation of Remark \ref{rmk:notation}.  If we work backwards as described in Example \ref{ex:broken} above, the result is a broken bracelet of length $2r+m-5$ with $m-2$ marked vertices. Under our map, this broken bracelet maps onto the $V_r$-combination that we began with, verifying the claim of surjectivity. Now, if we consider similar maps as we vary $m$ then we get a sequence of surjective maps (where the domain of each map is the set of broken bracelets of length $2r+m-5$ with $m-2$ marked vertices up to reflection and the range is the the set of $V_r$-combinations with $m$ parts). The domain of each surjective map has size $g_{m-2}(2r+m-5)$. The result follows.
\end{proof}

Theorem \ref{upperbound} gives an upper bound for the number of partitions of 2{\bf x} in terms of the numbers $g_k(n)$ of certain broken bracelets up to reflection. In \cite{ACM}, the authors provide a way to enumerate $g_k(n)$ from smaller broken bracelets, specifically, 
\[
g_k(n) = g_{k}(n-2) + g_{k-2}(n-4) + \binom{n-k-1}{k-1}.
\]
Due to the constraint that the marked vertices are nonconsecutive, the authors translate the numbers $g_n(k)$ into \textit{necklace binomial coefficients}, which are defined as $\nbinom{t}{k} = g_k(t+k-1)$. The triangle of necklace binomial coefficients is known as \textit{Losanitsch's triangle} \cite{Losanitch}; the first ten rows of this triangle are shown in the top table of Figure \ref{fig:triangles}. In the paper \cite{ACM}, the authors provide closed-form solutions to certain necklace binomial coefficients: 
\begin{align*}
    \nbinom{k}{k} = 1, \quad & \nbinom{k+1}{k} = \left\lfloor \frac{k+2}{2} \right\rfloor, \quad \nbinom{k+2}{k} = \left\lfloor \frac{(k+2)^2}{4} \right\rfloor, \; \text{and}\\
    \nbinom{k+3}{k} &= \sum_{j=0}^k (-1)^{k-j} \left( \sum_{i=0}^j \left\lfloor \frac{j+2}{2} \right\rfloor + \binom{j+1}{2} \right).
\end{align*}

\noindent Moreover, the necklace binomial coefficients are symmetric in that sense that $\nbinom{t}{k} = \nbinom{t}{t-k}$. Thus, the above four formulas give the first four columns of the top triangle in Figure \ref{fig:triangles}, respectively. The table beneath Losanitch's triangle in Figure \ref{fig:triangles} is a triangle that we include for comparison which enumerates the number of partitions of $2\x$ with $m$ parts and where $\x$ has length $r$. For ease of comparison, the rows of interest in the Losanitsch's triangle are highlighted blue. The highlighted rows of Losanitch's triangle serve an upperbound to the rows of the triangle below it. Secifically, the correspondence for the two triangles in Figure \ref{fig:triangles} is the following: the $(t,k)$-entry of Losanitsch's triangle is an upper bound for the $(\frac{t}{2}+1,k+2)$-entry of the triangle below it. 

\begin{figure}[h]
    \centering
    \resizebox{8.5cm}{!}{%
    \begin{tabular}[scale=.75]{|c||c|c|c|c|c|c|c|c|c|c|c|}
    \hhline{-||-----------}
    \textbf{t/k} & \textbf{0} & \textbf{1} & \textbf{2} & \textbf{3} & \textbf{4} & \textbf{5} & \textbf{6} & \textbf{7} & \textbf{8} & \textbf{9} & \textbf{10}\\
    \hhline{=::===========}
    \textbf{1} & \textbf{1} & \textbf{1} &  &  &  &  &  &  &  &  &\\
    \rowcolor{LightCyan}
    \textbf{2} & \textbf{1} & \textbf{1} & \textbf{1} &  & &  &  &  &  &  &\\
    \textbf{3} & \textbf{1} & \textbf{2} & \textbf{2} & \textbf{1} &  &  &  &  &  &  &\\
    \rowcolor{LightCyan}
    \textbf{4} & \textbf{1} & \textbf{2} & \textbf{4} & \textbf{2} & \textbf{1} &  &  &  &  &  &\\
    \textbf{5} & \textbf{1} & \textbf{3} & \textbf{6} & \textbf{6} & \textbf{3} & \textbf{1} &  &  &  &  & \\
    \rowcolor{LightCyan}
    \textbf{6} & \textbf{1} & \textbf{3} & \textbf{9} & \textbf{10} & \textbf{9} & \textbf{3} & \textbf{1} &  &  &  & \\
    \textbf{7} & \textbf{1} & \textbf{4} & \textbf{12} & \textbf{19} & \textbf{19} & \textbf{12} & \textbf{4} & \textbf{1} &  &  & \\
    \rowcolor{LightCyan}
    \textbf{8} & \textbf{1} & \textbf{4} & \textbf{16} & \textbf{28} & \textbf{38} & \textbf{28} & \textbf{16} & \textbf{4} & \textbf{1} &  & \\
    \textbf{9} & \textbf{1} & \textbf{5} & \textbf{20} & \textbf{44} & \textbf{66} & \textbf{66} & \textbf{44} & \textbf{20} & \textbf{5} & \textbf{1} & \\
    \rowcolor{LightCyan}
    \textbf{10} & \textbf{1} & \textbf{5} & \textbf{25} & \textbf{60} & \textbf{110} & \textbf{126} & \textbf{110} & \textbf{60} & \textbf{25} & \textbf{5} & \textbf{1}\\
    \hhline{-||-----------}
\end{tabular}}

\vspace{.5cm}
    \resizebox{8.5cm}{!}{%
\begin{tabular}[scale=.75]{|c||c|c|c|c|c|c|c|c|c|c|c|}
    \hhline{-||-----------}
    \textbf{r/m} & \textbf{2} & \textbf{3} & \textbf{4} & \textbf{5} & \textbf{6} & \textbf{7} & \textbf{8} & \textbf{9} & \textbf{10} & \textbf{11} & \textbf{12}\\
    \hhline{=::===========}
    \textbf{2} & \textbf{1} & \textbf{1} & \textbf{1} &  &  &  &  &  &  &  &\\
    \textbf{3} & \textbf{1} & \textbf{2} & \textbf{4} & \textbf{2} & \textbf{1} &  &  &  &  &  &\\
    \textbf{4} & \textbf{1} & \textbf{3} & \textbf{9} & \textbf{10} & \textbf{8} & \textbf{3} & \textbf{1} &  &  &  &\\
    \textbf{5} & \textbf{1} & \textbf{4} & \textbf{16} & \textbf{28} & \textbf{34} & \textbf{24} & \textbf{13} & \textbf{4} & \textbf{1} &  &\\
    \textbf{6} & \textbf{1} & \textbf{5} & \textbf{25} & \textbf{60} & \textbf{100} & \textbf{106} & \textbf{83} & \textbf{45} & \textbf{19} & \textbf{5} & \textbf{1}\\
    \hhline{-||-----------}
\end{tabular}}
    \caption{Losanitsch's triangle is above and below is a triangle enumerating bracelets of length $2r+m-5$ with $m-2$ marked vertices.}
    \label{fig:triangles}
\end{figure}

\section{Refined enumeration}\label{sec:special case}

In the previous section we gave an upper bound for the total number of $V_r$-combinations of the vector $2\bf{x}$. The reason why this count is an upper bound rather than a formula is because there are cases where different bracelets encode the same $V_r$-combination. Moving forward, our strategy is to understand exactly when different bracelets encode the same $V_r$-combination.  

The following example is an instance of two bracelets that encode the same $V_4$-combination.  An approach to counting the number of distinct combinations is to set-up an equivalence relation on the set of broken bracelets that takes into account that swapping parts doesn't change a $V_r$-combination. 

\begin{example}\label{equivb4bracelets}
In the set-up from the previous section, equivalent $V_4$-combinations 1-2-3-4-12-34 and 1-2-34-12-3-4 are represented by the following different broken bracelets:
\[
\begin{tikzpicture}[scale = .6] 
\tikzmath{\c=.15;}
\foreach \n in {0,...,8}{
\draw[fill=black] (\n,0) circle (\c);
\draw[fill=black] (\n + 15,0) circle (\c);
}
\draw[red,fill=red] (0,0) circle (\c);
\draw[red,fill=red] (2,0) circle (\c);
\draw[red,fill=red] (4,0) circle (\c);
\draw[red,fill=red] (7,0) circle (\c);
\draw[red,fill=red] (15,0) circle (\c);
\draw[red,fill=red] (17,0) circle (\c);
\draw[red,fill=red] (20,0) circle (\c);
\draw[red,fill=red] (22,0) circle (\c);
\draw (-1,-.5) node {\tiny 1};
\draw (0,-.5) node {\tiny -};
\draw (1,-.5) node {\tiny 2};
\draw (2,-.5) node {\tiny -};
\draw (3,-.5) node {\tiny 3};
\draw (4,-.5) node {\tiny -};
\draw (5,-.5) node {\tiny 4};
\draw (6,-.5) node {\tiny 3};
\draw (7,-.5) node {\tiny -};
\draw (8,-.5) node {\tiny 2};
\draw (9,-.5) node {\tiny 1};

\draw (14,-.5) node {\tiny 1};
\draw (15,-.5) node {\tiny -};
\draw (16,-.5) node {\tiny 2};
\draw (17,-.5) node {\tiny -};
\draw (18,-.5) node {\tiny 3};
\draw (19,-.5) node {\tiny 4};
\draw (20,-.5) node {\tiny -};
\draw (21,-.5) node {\tiny 3};
\draw (22,-.5) node {\tiny -};
\draw (23,-.5) node {\tiny 2};
\draw (24,-.5) node {\tiny 1};
\draw[green] (2.5,-1) rectangle (6.5,1);
\draw[green] (17.5,-1) rectangle (21.5,1);
\end{tikzpicture}
\]
Notice that the bracelets are not the same if we read forward or backward, that is, they are different bracelets up to reflection. However, under a more careful inspection, we see that after reflecting the patterns boxed in green, we can obtain one bracelet from the other while maintaining the same $V_r$-combination. This reflection is indeed a swap of the ``34'' and ``3-4'' patterns in the $V_r$-combination. 
\end{example}

We now give some definitions that will help us talk about swapping more precisely. A \textit{pattern} is a connected sub-bracelet enclosed by marked vertices. 
\begin{definition}
Let $v_1v_2\cdots v_n$ be a broken bracelet.  The sequence $P = v_iv_{i+1} \cdots v_j$ is a \textit{pattern} of the bracelet $v_1v_2\cdots v_n$ if 
\begin{enumerate}
\item both vertices $v_{i-1}$ and $v_{j+1}$ are marked, \emph{or}
\item the vertex $v_{j+1}$ is marked and $i=1$, \emph{or} 
\item  the vertex $v_{i-1}$ is marked and $j=n$. 
\end{enumerate}
\end{definition}

Using the language above, every broken bracelet has the form $P_1 | P_2 | \cdots | P_k$ for some patterns $P_i$ and where the vertical lines represent marked vertices. In this way, we can easily talk about swapping patterns without specific reference to the supports of the vectors in a $V_r$-combination.

Before the next definition, it will be convenient to introduce the following notation. We will denote the reverse of a pattern $P$ by $\bar{P}$. For instance, in Example \ref{equivb4bracelets}, each boxed pattern is the reverse of the other. We also make the distinction between bracelets that are ``equal'' and ``equal up to reflection.''

\begin{definition}
Let $b$ be a broken bracelet. A sub-bracelet $b'$ of $b$ is a  \textit{central sub-bracelet} if $b'$ is a pattern and the number of unmarked vertices in $b$ to the right of $b'$ equals the number of unmarked vertices to the left of $b'$. 
\end{definition}

\begin{definition}
Let $b$ be a broken bracelet. A broken bracelet $b'$ is said to be a \textit{central reversal} of $b$ if it can be obtained by a reversal of a central sub-bracelet of $b$, i.e., if there exists a central sub-bracelet $Q$ of $b = P|Q|R$ such that $b' = P|\bar{Q}|R$.
\end{definition}

\begin{example}\label{fourpartcomb}
For this example, we revisit the bracelet in Example \ref{equivb4bracelets}, however, this time we use the notation we just introduced. This bracelet has the form $P|Q|R$ where $Q$ is the pattern boxed in green in Example \ref{equivb4bracelets}.

\begin{center}
\begin{tikzpicture}[scale = .6] 
\foreach \n in {0,...,8}{
\draw[fill=black] (\n,0) circle (.15);
}
\draw[red,fill=red] (0,0) circle (.15);
\draw[red,fill=red] (2,0) circle (.15);
\draw[red,fill=red] (4,0) circle (.15);
\draw[red,fill=red] (7,0) circle (.15);
\draw (-1,.5) node {\tiny 1};
\draw (0,.5) node {\tiny -};
\draw (1,.5) node {\tiny 2};
\draw (2,.5) node {\tiny -};
\draw (3,.5) node {\tiny 3};
\draw (4,.5) node {\tiny -};
\draw (5,.5) node {\tiny 4};
\draw (6,.5) node {\tiny 3};
\draw (7,.5) node {\tiny -};
\draw (8,.5) node {\tiny 2};
\draw (9,.5) node {\tiny 1};

\draw (-.25,-.5)--(-.25,-.75);
\draw (-.25,-.75)--(1.25,-.75);
\draw (1.25,-.75)--(1.25,-.5);

\draw (2.75,-.5)--(2.75,-.75);
\draw (2.75,-.75)--(6.25,-.75);
\draw (6.25,-.75)--(6.25,-.5);

\draw (7.75,-.5)--(7.75,-.75);
\draw (7.75,-.75)--(8.25,-.75);
\draw (8.25,-.75)--(8.25,-.5);

\filldraw (.5,-1.25) node {\small$P$};
\filldraw (4.5,-1.25) node {\small$Q$};
\filldraw (8,-1.25) node {\small$R$};
\end{tikzpicture}
\end{center}

\noindent Note $Q$ is a central sub-bracelet since $Q$ is indeed a pattern and there is one unmarked vertex to the left and right of $Q$. Furthermore, if we reverse the pattern $Q$ as in Example \ref{equivb4bracelets}, the result is a central reversal of the above bracelet. 

\end{example}

For bracelets $b$ and $b'$, we shall write $b \sim b'$ if there is a sequence of bracelets $b = b_0, \ldots, b_n = b'$ such that $b_{i+1}$ is a central reversal of $b_i$ for each $0\leq i\leq n-1$. We immediately have $b \sim \bar b$ since the reflection of a whole bracelet is a central reversal where $Q = b$. For a given $r$ and $m$, let $X_{r,m}$ be the set of bracelets of length $2r+m-5$ with $m-2$ marked vertices and let $X_{r,m}/\sim$ be the quotient space of $X_{r,m}$ by $\sim$.

\begin{theorem}\label{countbracelets}
Let ${\bf x}$ have length $r$. Then $P(2{\bf x}) = \sum_{m=2}^{2r} |X_{r,m}/\sim|$.
\end{theorem}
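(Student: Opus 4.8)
The plan is to upgrade the surjections of Theorem \ref{upperbound} to bijections after quotienting by $\sim$. For each $m$, write $\mathcal{C}_m$ for the set of $V_r$-combinations of $2\mathbf{x}$ with exactly $m$ parts, and let $\varphi_m\colon X_{r,m}\to\mathcal{C}_m$ be the well-defined surjection built in Theorem \ref{upperbound}. Since every $V_r$-combination of $2\mathbf{x}$ has between $2$ and $2r$ parts, once $\varphi_m$ is shown to descend to a bijection $\overline{\varphi}_m\colon X_{r,m}/\!\sim\ \to\mathcal{C}_m$, summing over $m$ yields $\sum_{m=2}^{2r}|X_{r,m}/\!\sim|=\sum_{m=2}^{2r}|\mathcal{C}_m|=P(2\mathbf{x})$, which is the claim. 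Thus the theorem reduces to two statements: (a) $\varphi_m$ is constant on $\sim$-classes, and (b) the induced map is injective, i.e.\ any two bracelets with the same image are $\sim$-equivalent.

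For (a) I would show that a single central reversal preserves the encoded combination. After appending the two end vertices, the unmarked vertices carry the palindromic labels $1,2,\dots,r-1,r,r-1,\dots,2,1$, so the labeling is symmetric about the central (pivot) vertex. A central sub-bracelet $Q$ has equally many unmarked vertices on each side, hence (the total being odd) it contains the pivot and is symmetric about it; reversing $Q$ therefore reflects the positions of its marked vertices about the pivot while leaving every label fixed. Each pattern inside $Q$ is carried to a mirror pattern spanning the same set of labels, so the multiset of vectors, and hence the combination, is unchanged. Iterating along a $\sim$-chain gives (a).

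The heart of the argument, and the step I expect to be hardest, is (b). First I would make the fibers explicit: building on Lemma \ref{split} and the pivot-splitting of Example \ref{ex:broken}, a bracelet mapping to a fixed combination $\mathcal{M}$ is the same data as an \emph{unordered} splitting $\{\mathcal{M}_1,\mathcal{M}_2\}$ of $\mathcal{M}$ into two $V_r$-combinations of $\mathbf{x}$; the pivot recovers the splitting from the bracelet, so fibers of $\varphi_m$ correspond bijectively to splittings of $\mathcal{M}$. I then model a splitting as a proper $2$-coloring of the overlap graph $G(\mathcal{M})$ whose vertices are the parts of $\mathcal{M}$ and whose edges join parts with overlapping support, taken up to the global color swap (which corresponds to reflection, consistent with $b\sim\bar b$). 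The technical crux is a structural lemma identifying the connected components of $G(\mathcal{M})$ with the \emph{blocks} of $\mathcal{M}$, namely the maximal coordinate-intervals every internal gap of which is spanned by some part; equivalently the block boundaries are exactly the gaps spanned by no part. One direction is immediate (no part crosses a gap of span $0$, so $G(\mathcal{M})$ disconnects there), while the other requires showing a run with every internal gap covered is connected, which follows since the spanning parts overlap consecutively and every other part meets one of them.

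Finally I would translate the moves. A central reversal whose window is a suffix of coordinates $\{s,\dots,r\}$ is legal precisely when no part of $\mathcal{M}$ spans the gap before $s$, i.e.\ when $s$ is a block boundary, and by the swap-of-halves description underlying (a) its effect on the coloring is to flip the colors of exactly the blocks lying in $\{s,\dots,r\}$. Identifying splittings with $\mathbb{F}_2^{\,c}/\langle\mathbf{1}\rangle$, where $c$ is the number of blocks, the available central reversals realize all such ``suffix flips''; since consecutive suffix flips differ by a single-block flip, they generate the whole group, so central reversals act transitively on the set of splittings. Hence every fiber of $\varphi_m$ is a single $\sim$-class, $\overline{\varphi}_m$ is a bijection, and the summation above finishes the proof. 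The main obstacle is exactly the structural lemma matching components of $G(\mathcal{M})$ with blocks, together with the careful verification that a central reversal acts as a clean suffix flip on colorings; the palindromic labeling is what keeps both bookkeeping-heavy steps under control.
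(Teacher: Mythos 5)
Your proposal is correct; part (a) coincides with the paper's well-definedness argument, but your injectivity argument takes a genuinely different route. The paper proves injectivity by an explicit greedy procedure: it writes the two splittings $\{V_1,V_2\}$ and $\{U_1,U_2\}$ lexicographically and, one part at a time, locates $u_{1t}$ somewhere in the current bracelet, uses the equality of the partial sums $v_{11}+\cdots+v_{1(t-1)}=u_{11}+\cdots+u_{1(t-1)}$ to see that the two remaining tails cover the same coordinate window and hence together form a central sub-bracelet, and reverses that sub-bracelet to move $u_{1t}$ into position; iterating produces an explicit $\sim$-chain from $b$ to $b'$. You instead analyze an entire fiber at once: identifying it with proper $2$-colorings of the overlap graph, matching components with blocks, and showing that the legal central reversals act as suffix flips that generate $\mathbb{F}_2^{c}/\langle\mathbf{1}\rangle$, hence act transitively. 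Your route buys strictly more information --- it computes the fiber size as $2^{c-1}$ colorings up to swap (an exact count of the overcounting in Theorem \ref{upperbound} when all parts are distinct, and an upper bound in general), which explains precisely when and why that bound fails to be sharp; the paper's greedy argument yields no such count but is more elementary and, importantly, is the template the authors re-run by induction on $n$ for the $n$-star generalization in Theorem \ref{thrm:main}, where your $\mathbb{F}_2$-linear bookkeeping would have to be replaced by a symmetric-group action on colorings. Two harmless edge cases to handle in a full write-up: when $\mathcal{M}$ has repeated parts, distinct colorings can collapse to the same bracelet, so colorings only surject onto the fiber (this does not affect transitivity); and the reversal of the whole bracelet has window $\{2,\dots,r\}$ yet acts as the global color swap, which is the identity modulo $\langle\mathbf{1}\rangle$ and so can be discarded from the generating set.
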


\begin{proof}
Similar to the proof of Theorem \ref{upperbound}, we can consider the process described in Example \ref{ex:broken} as a map from  $X_{r,m}/\sim$ to the set of $V_r$-combinations consisting of exactly $m$ vectors. This map is well-defined since central reversals appear in the $V_r$-combination as swapping the order of the vectors listed, which does not change the $V_r$-combination. Using arguments similar to those in the proof of Theorem \ref{upperbound}, this map is also surjective.

We now show the map is injective, that is, bracelets that encode the same partition of $2\x$ belong to the same class of bracelets. Let $b$ and $b'$ be bracelets in $X_{r,m}$ which correspond to the same partition of $2\x$. Each bracelet encodes a list of vectors, and by Lemma \ref{split}, each list of vectors can be grouped into two, say into sets $V_1$ and $V_2$ for $b$ and $U_1$ and $U_2$ for $b'$, so that the sum of vectors in each group is a partition of $\x$. For each $i = 1, 2$, let $V_i = \{v_{i1}, \ldots, v_{ik_i}\}$ where $k_i \in \mathbb{N}$, $v_{ij} \in \mathbb{Z}^n$ and $j = 1, \ldots, k_i$. Similarly, for each $i = 1, 2$, let $U_i = \{u_{i1}, \ldots, u_{i\ell_i}\}$ where $\ell_i \in \mathbb{N}$, $u_{ij} \in \mathbb{Z}^n$ and $j = 1, \ldots, \ell_i$. Without loss of generality, assume the sets $V_i$ and $U_i$ are written using the lexicographic order for each $i = 1,2$. Note that if $V_1 = U_1$, then necessarily $V_2 = U_2$ since $V_1 \cup V_2 = U_1 \cup U_2$ and each $V_i$, $U_i$ is a partition of $\x$. In this case, not only is $b \sim b'$ but in fact $b = b'$.

Now we will assume $V_1 \neq U_1$ and we will perform sequence of swaps between $V_1$ and $V_2$ based on central rotations such that at the end of the sequence $V_1$ and $U_1$ will contain the same vectors. Since $V_1$ and $U_1$ are written using the lexicographic order and $b$ and $b'$ encode the same partition of $2\x$, then $v_{i1} = u_{11}$ for some $i$.  If $i=1$, then we will do nothing, $v_{11}=u_{11}$ which is desired, else, we will perform the following operation. Since $b$ is considered a central sub-bracelet of $b$, then $\bar{b}$ is a central reversal of $b$. Thus, if we relabel $V_1,V_2$ and their elements to correspond to $\bar{b}$, the result is $v_{11} = u_{11}$. At this point, if $V_1 = U_1$, then $b' = \bar{b}$ and $b' \sim b$, as desired. Otherwise, assume for some $t > 1$, there is a sequence of bracelets $b = b_0, b_1, \ldots, b_{t-1}$ obtained by central reversals or making no change (i.e. we could have $b_{i+1}=b_i$) such that if the sets $V_1$, $V_2$ correspond to the bracelet $b_{t-1}$, then $v_{1j} = u_{1j}$ for all $j = 1, \ldots, t-1$. Since $b$ and $b'$ encode the same partition, then $v_{ij} = u_{1t}$ for some $i,j$. Since $v_{11} + \cdots + v_{1 (t-1)} = u_{11} + \cdots + u_{1 (t-1)}$, then $v_{1t} + \cdots + v_{1k_1} = u_{1t} + \cdots + u_{1\ell_1}$. Since $v_{ij} = u_{1t}$, then $v_{ij} + \cdots + v_{ik_i} = u_{1t} + \cdots + u_{1\ell_1}$. 
Hence, the vertices of the bracelet $b_{t-1}$ corresponding to the vectors $v_{1t} + \cdots + v_{1k_1} = v_{ij} + \cdots + v_{ik_i}$, so $\{v_{1t}, \ldots, v_{1k_1}\} \cup \{v_{ij}, \ldots, v_{ik_i}\}$ is a central sub-bracelet of $b_{t-1}$. Reversing this central sub-bracelet gives a new bracelet $b_t$ and if we relabel the sets $V_1$, $V_2$, and their elements to correspond to $b_t$ then $v_{1j} = u_{1j}$ for $j = 1,\ldots,t$. Repeat these steps until $t=\ell_1$, the result is $V_1$ and $U_1$ contain the vectors as desired.
\end{proof}

\section{Generalizing bracelets to {\it{n}}-stars}\label{sec:main}

Here we generalize the notion of a bracelet and a central sub-bracelet in an effort to count partitions of the vector $n{\bf x}$ for each positive integer $n \in \mathbb{N}$. 

\begin{definition}
A \textit{strand} is a sequence $v_1 \cdots v_\ell$ of marked and unmarked vertices such that no two marked vertices are consecutive and the final vertex $v_\ell$ is unmarked.
\end{definition}

\begin{example}\label{ex:strand} For a fixed $r$ a strand encodes a partition of ${\bf x}$, which we recall is simply the all ones vector. Figure \ref{fig:strand} below depicts a strand of length 8 which encodes the four-part partition of $\x=(1,1,1,1,1,1)$ on the right.

\begin{figure}[h]
\centering
\begin{tikzpicture}[scale = .6] 
\foreach \n in {0,...,7}{
\draw[fill=black] (\n,0) circle (.15);
}
\draw[red,fill=red] (1,0) circle (.15);
\draw[red,fill=red] (4,0) circle (.15);
\draw[red,fill=red] (6,0) circle (.15);
\draw (-1,-.5) node {\tiny 1};
\draw (0,-.5) node {\tiny 2};
\draw (1,-.5) node {\tiny -};
\draw (2,-.5) node {\tiny 3};
\draw (3,-.5) node {\tiny 4};
\draw (4,-.5) node {\tiny -};
\draw (5,-.5) node {\tiny 5};
\draw (6,-.5) node {\tiny -};
\draw (7,-.5) node {\tiny 6};

\draw (9,0) node {$\leadsto$};

\draw (17.5,0) node {
$E_{12} + E_{34} + E_{5} + E_{6} = \begin{brsm}
1\\1\\0\\0\\0\\0
\end{brsm}
+
\begin{brsm}
0\\0\\1\\1\\0\\0
\end{brsm}
+
\begin{brsm}
0\\0\\0\\0\\1\\0
\end{brsm}
+
\begin{brsm}
0\\0\\0\\0\\0\\1
\end{brsm}$
};
\end{tikzpicture}
    \caption{A decoding process for a strand with 5 marked vertices into a partition with 5-1=4 parts.}
    \label{fig:strand}
\end{figure}
\end{example}

\begin{definition}
A \textit{closing substrand} of a strand $v_1 \cdots v_\ell$ is either the entire strand $v_1 \cdots v_\ell$ or is obtained by deleting the initial $i$ vertices where $v_i$ is marked, namely $v_{i+1} \cdots v_\ell$. 
\end{definition}

For instance, the strand from Figure~\ref{fig:strand} has four closing substands and each correspond to a linear combination on the right.

\begin{figure}[h]
\centering
\begin{tikzpicture}[scale = .6] 
\foreach \n in {0,...,7}{
\draw[fill=black] (\n,0) circle (.15);
}
\draw[red,fill=red] (1,0) circle (.15);
\draw[red,fill=red] (4,0) circle (.15);
\draw[red,fill=red] (6,0) circle (.15);
\draw (-1,-.5) node {\tiny 1};
\draw (0,-.5) node {\tiny 2};
\draw (1,-.5) node {\tiny -};
\draw (2,-.5) node {\tiny 3};
\draw (3,-.5) node {\tiny 4};
\draw (4,-.5) node {\tiny -};
\draw (5,-.5) node {\tiny 5};
\draw (6,-.5) node {\tiny -};
\draw (7,-.5) node {\tiny 6};

\foreach \n in {2,...,7}{
\draw[fill=black] (\n,-2) circle (.15);
}
\draw[red,fill=red] (4,-2) circle (.15);
\draw[red,fill=red] (6,-2) circle (.15);
\draw (2,-2.5) node {\tiny 3};
\draw (3,-2.5) node {\tiny 4};
\draw (4,-2.5) node {\tiny -};
\draw (5,-2.5) node {\tiny 5};
\draw (6,-2.5) node {\tiny -};
\draw (7,-2.5) node {\tiny 6};

\foreach \n in {5,...,7}{
\draw[fill=black] (\n,-4) circle (.15);
}
\draw[red,fill=red] (6,-4) circle (.15);
\draw (5,-4.5) node {\tiny 5};
\draw (6,-4.5) node {\tiny -};
\draw (7,-4.5) node {\tiny 6};

\draw[fill=black] (7,-6) circle (.15);
\draw (7,-6.5) node {\tiny 6};

\draw (12,-0) node {$\leadsto$};
\draw (12,-2) node {$\leadsto$};
\draw (12,-4) node {$\leadsto$};
\draw (12,-6) node {$\leadsto$};

\draw (19,0) node {
$E_{12} + E_{34} + E_{5} + E_{6}$
};
\draw (19,-2) node {
$E_{34} + E_{5} + E_{6}$
};
\draw (19,-4) node {
$E_{5} + E_{6}$
};
\draw (19,-6) node {
$E_{6}$
};
\end{tikzpicture}
    \caption{The four closing substrands of the strand in Figure \ref{fig:strand} and their corresponding subpartitions.}
    \label{fig:substrand}
\end{figure}

\begin{definition}
An \textit{$n$-star} is the identification of $n$ strands at their final vertex. Let $\Star_n(r)$ be the set of all $n$-stars whose strands have $r-1$ unmarked vertices.
\end{definition}

\begin{remark}
A broken bracelet is a 2-star.
\end{remark}

\begin{example}\label{ex:star} A 3-star with $r=4$ with labeled strands is pictured in Figure \ref{fig:3star}; its strands are:

\[
\begin{array}{crcl}
    1. & \begin{tikzpicture}[scale=.6]
    \foreach \n in {4,...,9}{
    \draw[fill=black] (\n,0) circle (.15);
    }
    \draw[red,fill=red] (4,0) circle (.15);
    \draw[red,fill=red] (6,0) circle (.15);
    \draw[red,fill=red] (8,0) circle (.15);
    \end{tikzpicture} & \rightarrow & 1-2-3-4\\
    2. & \begin{tikzpicture}[scale=.6]
    \foreach \n in {3,...,7}{
    \draw[fill=black] (\n,0) circle (.15);
    }
    \draw[red,fill=red] (3,0) circle (.15);
    \draw[red,fill=red] (5,0) circle (.15);
    \end{tikzpicture} & \rightarrow & 1-2-34\\
    3. & \begin{tikzpicture}[scale=.6]
    \foreach \n in {3,...,6}{
    \draw[fill=black] (\n,0) circle (.15);
    }
    \draw[red,fill=red] (4,0) circle (.15);
    \end{tikzpicture} & \rightarrow & 12-34
\end{array}
\]
After identifying the above three strands at their final vertex, the result is the 3-star shown on the left in Figure \ref{fig:3star}. 
\vspace{.5cm}

\begin{figure}[h]
    \centering
    \begin{multicols}{3}
    \begin{tikzpicture}[scale=.55]
    \tikzmath{\a = 60;\b = 60;\c=.15;}
    \foreach \n in {4,...,9}{
    \draw[fill=black] (\n,0) circle (\c);
    }
    \draw[red,fill=red] (4,0) circle (\c);
    \draw[red,fill=red] (6,0) circle (\c);
    \draw[red,fill=red] (8,0) circle (\c);
    \draw (3,0) node {\tiny\encircled{1}}; 
    \foreach \n in {0,...,4}{
    \draw[fill=black] (9,0) ++(\a:\n) circle (\c);
    }
    \draw[red,fill=red] (9,0) ++(\a:2) circle (\c);
    \draw[red,fill=red] (9,0) ++(\a:4) circle (\c);
    \draw (9,0) ++ (\a:5) node {\tiny\encircled{2}};
    \foreach \n in {0,...,3}{
    \draw[fill=black] (9,0) ++(-\b:\n) circle (\c);
    }
    \draw[red,fill=red] (9,0) ++(-\b:2) circle (\c);
    \draw (9,0) ++ (-\b:4) node {\tiny\encircled{3}};
    \end{tikzpicture}
    
    \begin{tikzpicture}[scale=.55]
    \tikzmath{\a = 60;\b = 60;\c = .15;}
    \foreach \n in {4,...,9}{
    \draw[fill=black] (\n,0) circle (\c);
    }
    \draw[red,fill=red] (4,0) circle (\c);
    \draw[red,fill=red] (6,0) circle (\c);
    \draw[red,fill=red] (8,0) circle (\c);
    \draw (3,0) node {\tiny\encircled{1}};
    \fill[white] (9,0) circle (\c);
    \foreach \n in {0,...,4}{
    \draw[fill=black] (9,0) ++(\a:\n) circle (\c);
    }
    \draw[red,fill=red] (9,0) ++(\a:2) circle (\c);
    \draw[red,fill=red] (9,0) ++(\a:4) circle (\c);
    \draw (9,0) ++ (\a:5) node {\tiny\encircled{2}};
    \foreach \n in {0,...,3}{
    \draw[fill=black] (9,0) ++(-\b:\n) circle (\c);
    }
    \draw[red,fill=red] (9,0) ++(-\b:2) circle (\c);
    \draw (9,0) ++ (-\b:4) node {\tiny\encircled{3}};
    \draw[blue,fill=blue] (9,0) circle (\c);
    \draw[orange,fill=orange] (8,0) circle (\c);
    \draw[blue,fill=blue] (7,0) circle (\c);
    \draw[blue,fill=blue] (9,0) ++(\a:1) circle (\c);
    \draw[blue,fill=blue] (9,0) ++(-\b:1) circle (\c);
    \end{tikzpicture}
    
    \begin{tikzpicture}[scale=.55]
    \tikzmath{\a = 60;\b = 60;\c = .15;}
    \foreach \n in {4,...,8}{
    \draw[fill=black] (\n,0) circle (\c);
    }
    \draw[red,fill=red] (4,0) circle (\c);
    \draw[red,fill=red] (6,0) circle (\c);
    \draw (3,0) node {\tiny\encircled{1}};
    \foreach \n in {0,...,5}{
    \draw[fill=black] (8,0) ++(\a:\n) circle (\c);
    }
    \draw[red,fill=red] (8,0) ++(\a:3) circle (\c);
    \draw[red,fill=red] (8,0) ++(\a:5) circle (\c);
    \draw (8,0) ++ (\a:6) node {\tiny\encircled{2}};
    \foreach \n in {0,...,3}{
    \draw[fill=black] (8,0) ++(-\b:\n) circle (\c);
    }
    \draw[red,fill=red] (8,0) ++(-\b:2) circle (\c);
    \draw (8,0) ++ (-\b:4) node {\tiny\encircled{3}};
    \draw[blue,fill=blue] (8,0) circle (\c);
    \draw[orange,fill=orange] (8,0) ++(\a:1) circle (\c);
    \draw[blue,fill=blue] (7,0) circle (\c);
    \draw[blue,fill=blue] (8,0) ++(\a:2) circle (\c);
    \draw[blue,fill=blue] (8,0) ++(-\b:1) circle (\c);
    \end{tikzpicture}
    \end{multicols}
    
    \caption{A 3-star (left), a [3]-central substar (middle), and a central star permutation by a $120^\circ$ rotation (right).}
    \label{fig:3star}
\end{figure}
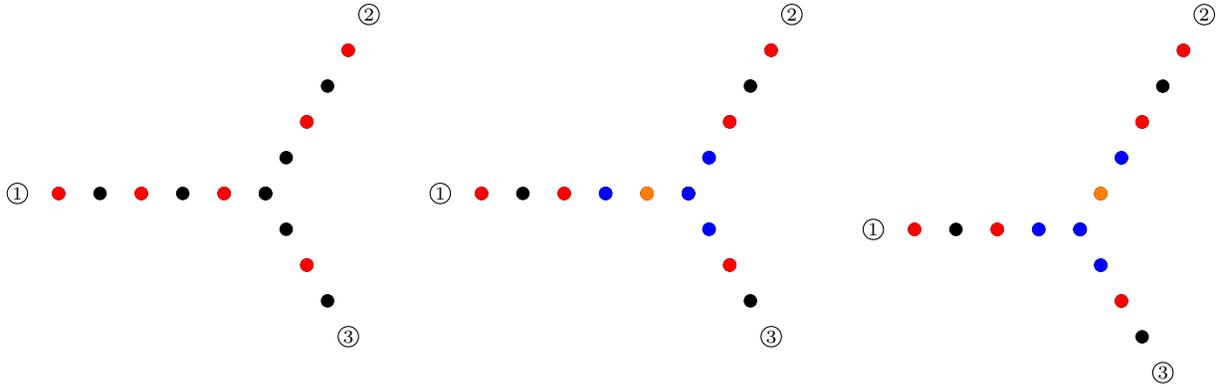

\end{example}

\begin{definition}
Given a star $s \in \Star_n(r)$ with strands labeled by $[n]=\{1,\ldots, n\}$, an \textit{$X$-substar} of $s$ is the identification, at the final vertex, of closing substrands of the strands labeled by $X \subseteq [n]$. An \textit{$X$-central substar} is an $X$-substar such that each strand has the same number of unmarked vertices, i.e., if it belongs to $\Star_{|X|}(r')$ for some $r' \leq r$.
\end{definition}

\begin{definition}
Let $s \in \Star_n(r)$ and $c$ be an $X$-central substar of $s$. A \textit{central star permutation} of $s$ by $c$ is an $n$-star obtained by some given permutation of the closing substrands in $c$.
\end{definition}

For a positive integer $k$ we use the notation $[k]$ to denote the set $\{1,\ldots,k\}$. The 3-star of Figure \ref{fig:3star} has a $[3]$-central substar and a $[2]$-central substar where the substars are embedded in the larger star and colored for distinction (blue = unmarked, orange = marked). Acting on the stands of the [3]-central substar by the permutation $(123)$ gives the star on the right in Figure \ref{fig:3star}. Acting on the stands of the $[2]$-central substar by the permutation $(12)$ gives the star on the right in Figure \ref{fig:another}.

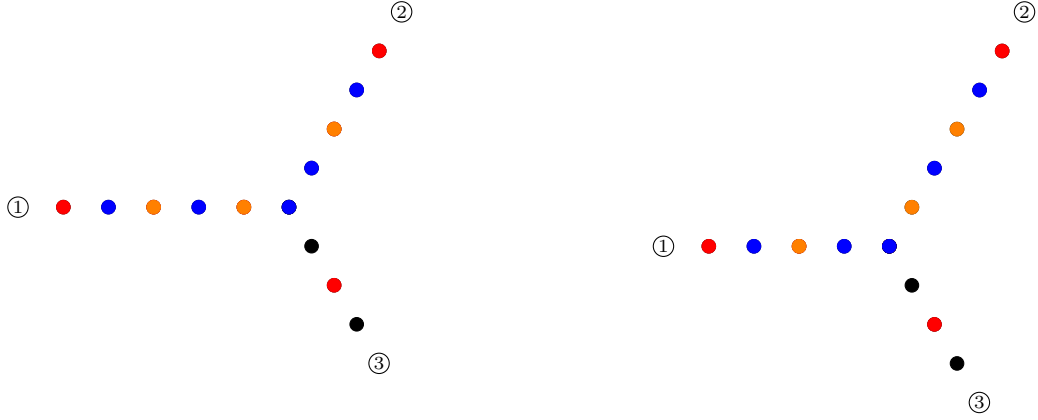
\begin{figure}[h]
    \centering
    \begin{multicols}{2}
    \begin{tikzpicture}[scale=.6]
    \tikzmath{\a=60;\b=60;\c=.15;}
    \foreach \n in {4,...,9}{
    \draw[fill=black] (\n,0) circle (\c);
    }
    \draw[red,fill=red] (4,0) circle (\c);
    \draw[red,fill=red] (6,0) circle (\c);
    \draw[red,fill=red] (8,0) circle (\c);
    \draw (3,0) node {\tiny\encircled{1}}; 
    \foreach \n in {0,...,4}{
    \draw[fill=black] (9,0) ++(\a:\n) circle (\c);
    }
    \draw[red,fill=red] (9,0) ++(\a:2) circle (\c);
    \draw[red,fill=red] (9,0) ++(\a:4) circle (\c);
    \draw (9,0) ++ (\a:5) node {\tiny\encircled{2}};
    \foreach \n in {0,...,3}{
    \draw[fill=black] (9,0) ++(-\b:\n) circle (\c);
    }
    \draw[red,fill=red] (9,0) ++(-\b:2) circle (\c);
    \draw (9,0) ++ (-\b:4) node {\tiny\encircled{3}};
    \draw[blue,fill=blue] (5,0) circle (\c);
    \draw[orange,fill=orange] (6,0) circle (\c);
    \draw[blue,fill=blue] (7,0) circle (\c);
    \draw[orange,fill=orange] (8,0) circle (\c);
    \draw[blue,fill=blue] (9,0) circle (\c);
    \draw[blue,fill=blue] (9,0) ++(\a:1) circle (\c);
    \draw[orange,fill=orange] (9,0) ++(\a:2) circle (\c);
    \draw[blue,fill=blue] (9,0) ++(\a:3) circle (\c);
    \end{tikzpicture}
    
\begin{tikzpicture}[scale=.6]
    \tikzmath{\a=60;\b=60;\c=.15;}
    \foreach \n in {4,...,8}{
    \draw[fill=black] (\n,0) circle (\c);
    }
    \draw[red,fill=red] (4,0) circle (\c);
    \draw[red,fill=red] (6,0) circle (\c);
    \draw[red,fill=red] (8,0) circle (\c);
    \draw (3,0) node {\tiny\encircled{1}}; 
    \foreach \n in {0,...,5}{
    \draw[fill=black] (8,0) ++(\a:\n) circle (\c);
    }
    \draw[red,fill=red] (8,0) ++(\a:5) circle (\c);
    \draw (8,0) ++ (\a:6) node {\tiny\encircled{2}};
    \foreach \n in {0,...,3}{
    \draw[fill=black] (8,0) ++(-\b:\n) circle (\c);
    }
    \draw[red,fill=red] (8,0) ++(-\b:2) circle (\c);
    \draw (8,0) ++ (-\b:4) node {\tiny\encircled{3}};
    \draw[blue,fill=blue] (5,0) circle (\c);
    \draw[orange,fill=orange] (6,0) circle (\c);
    \draw[blue,fill=blue] (7,0) circle (\c);
    \draw[blue,fill=blue] (8,0) circle (\c);
    \draw[blue,fill=blue] (8,0) ++(\a:2) circle (\c);
    \draw[orange,fill=orange] (8,0) ++(\a:1) circle (\c);
    \draw[orange,fill=orange] (8,0) ++(\a:3) circle (\c);
    \draw[blue,fill=blue] (8,0) ++(\a:4) circle (\c);
    \end{tikzpicture}
    \end{multicols}
    
    \caption{A [2]-central substar (left) of the 3-star in Figure \ref{fig:3star} and a central star permutation by (12) (right).}
    \label{fig:another}
\end{figure}

\begin{definition}
Two stars $s,s' \in \Star_n(r)$ are related by $\sim$, written $s \sim s'$, if there is a finite sequence $s = s_0, s_1, \ldots, s_k = s'$ of stars such that $s_i$ is a central star permutation of $s_{i-1}$ for $i = 1, \ldots, k$.
\end{definition}

\begin{theorem}\label{thrm:main}
If $n\geq 2$, then $P(n{\bf x}) = |\Star_n(r)/\sim|$.
\end{theorem}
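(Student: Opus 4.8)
The plan is to mirror the proofs of Theorem \ref{upperbound} and Theorem \ref{countbracelets}: build a decoding map $\Phi$ from $\Star_n(r)$ to the set of $V_r$-combinations of $n\x$ and show that it descends to a bijection on $\Star_n(r)/{\sim}$. Given a star $s\in\Star_n(r)$, each of its $n$ strands decodes to a partition of $\x$ as in Example \ref{ex:strand}, so collecting the multiset of all parts over the $n$ strands produces a multiset summing to $n\x$, i.e.\ a $V_r$-combination of $n\x$; this defines $\Phi$ and makes it well defined. Since the set of $V_r$-combinations of $n\x$ has cardinality $P(n\x)$ by definition, the theorem reduces to showing that the induced map $\overline{\Phi}\colon \Star_n(r)/{\sim}\,\to\,\{V_r\text{-combinations of } n\x\}$ is a well-defined bijection.

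For surjectivity I would first record the $n$-fold analogue of Lemma \ref{split}: every $V_r$-combination of $n\x$ splits into $n$ sub-multisets, each a $V_r$-combination of $\x$. This follows from the same greedy scan used in Lemma \ref{split}. Since coordinate $1$ is covered exactly $n$ times, exactly $n$ parts begin at $1$; distribute them among $n$ groups and proceed coordinate by coordinate, at coordinate $k$ assigning the parts that begin at $k$ to the groups whose current part closed at $k-1$. The number of such groups equals the number of parts beginning at $k$ because every coordinate is covered exactly $n$ times, so the scan never stalls and returns $n$ partitions of $\x$. Writing each partition in increasing order of support as a strand (Remark \ref{rmk:notation}) and identifying the strands at the coordinate-$r$ vertex yields a star in $\Star_n(r)$ that $\Phi$ sends to the chosen combination.

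That $\overline{\Phi}$ is well defined is the easy half: a central star permutation only reshuffles which strand carries each closing substrand attached at a common cut, leaving the total multiset of parts unchanged, and transitivity propagates this along any $\sim$-chain. The main obstacle is injectivity of $\overline{\Phi}$: if two stars $s,s'$ decode to the same combination, then $s\sim s'$. I would prove this by upgrading the inductive swap argument of Theorem \ref{countbracelets} from central reversals (the case $n=2$) to central star permutations, using two special instances of the latter: choosing the closing substrands to be entire strands permutes the strands outright, while choosing the cut at a level $k$ transposes the level-$k$ tails (the parts covering coordinates $k+1,\dots,r$) of any two strands that both have a cut at $k$. Writing the target strands of $s'$ as $Q_1,\dots,Q_n$, I match them one at a time. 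To match strand $1$ to $Q_1$, a whole-strand permutation first moves into position $1$ a strand whose first part equals the first part $[1,q_1]$ of $Q_1$; then, once the first $j$ parts agree so that $P_1$ has a cut at $q_j$, the part of $Q_1$ beginning at $q_j+1$ is either already the next part of $P_1$ (nothing to do) or lies in another strand $P_{\ell'}$ that also has a cut at $q_j$, and transposing the level-$q_j$ tails of $P_1$ and $P_{\ell'}$ matches one more part while fixing all coordinates $\le q_j$. After $Q_1$ is exhausted the remaining parts cover each coordinate exactly $n-1$ times, so the same procedure matches $Q_2$ using only strands $2,\dots,n$, and induction on the strand index completes the matching with strand $n$ forced. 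The points that must be checked carefully are that the partner strand $P_{\ell'}$ is always present among the not-yet-fixed strands and genuinely has the required cut, and that each swap is literally a central star permutation, i.e.\ that the chosen closing substrands all have the same number of unmarked vertices; both are consequences of the coverage counts together with the fact that a part beginning at $q_j+1$ forces a cut at $q_j$.
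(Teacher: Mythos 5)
Your proposal is correct and follows essentially the same route as the paper: surjectivity via the $n$-fold extension of Lemma~\ref{split}, and injectivity by aligning the strands of the two stars part by part through central star permutations that transpose the tails of two strands sharing a cut at the same level. The only structural difference is that the paper matches the $n$th strand and then formally inducts on $n$ with Theorem~\ref{countbracelets} as the base case, whereas you unroll that induction into a direct iteration over the strand index with the last strand forced; this is a cosmetic variation, and your explicit check that each swap is a bona fide central star permutation (equal numbers of unmarked vertices after a common cut) is the same coverage-count argument the paper uses.
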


\begin{proof}
We shall prove the correspondence from $n$-stars up to $\sim$ to partitions of $n\x = (n,\ldots,n)$ is bijective. First, it is surjective: given any partition of $nx$, we can construct an $n$-star which encodes this partition. We do this by first noting that we can extend the proof of Lemma \ref{split} so that an equivalent statement holds for $n\x$, allowing us to consider the partition of $n\x$ as $n$ partitions of $\x = (1,\ldots,1)$. Next we encode each partition of $\x$ as a strand. The identification of these $n$ stands is an $n$-star which corresponds to the original partition of $n\x$, which can be seen by simply reversing these steps.

Now, all that is left to do is prove the correspondence is injective. Let $s$ and $s'$ be two $n$-stars which correspond to the same partition of $n\x$. We will show $s \sim s'$, proceeding by induction on $n$. Note that when $n=2$, $s$ and $s'$ are bracelets and the statement corresponds to Theorem \ref{countbracelets}, which we prove above. Suppose $n > 2$ and assume that if two $(n-1)$-stars correspond to the same partition of $(n-1)\x$, then they are equivalent up to $\sim$.

Recall again, every stand of the $n$-stars $s$ and $s'$ give a partition of $\x$. For each $i = 1, \ldots, n$, let $V_i = \{v_{i1}, \ldots, v_{ik_i}\}$ be the partition of $\x$ encoded by the $i$th stand of $s$ where $k_i \in \mathbb{N}$, $v_{ij} \in \mathbb{Z}^n$ and $j = 1, \ldots, k_i$. Similarly, for each $i = 1, \ldots, n$, let $U_i = \{u_{i1}, \ldots, u_{i\ell_i}\}$ be the partition of $\x$ encoded by the $i$th stand of $s'$ where $\ell_i \in \mathbb{N}$, $u_{ij} \in \mathbb{Z}^n$ and $j = 1, \ldots, \ell_i$. Without loss of generality, order the sets $V_i$ and $U_i$ using the lexicographic order for each $i$. We shall identify $V_i$ with the $i$th stand of $s$ and $U_i$ with the $i$th strand of $s'$. Consider the $n$th strands $V_n$ and $U_n$. Note that if $V_n=U_n$, meaning $k_n = \ell_n$ and $v_{nj} = u_{nj}$ for all $j = 1,\ldots,k_n$, then removing the $n$th stands of both $s$ and $s'$ give two $(n-1)$-stars, denoted by $s \setminus V_n$ and $s' \setminus U_n$, respectively, corresponding to the same partition of $(n-1)\x$. By our hypothesis, there is a finite sequence $s \setminus V_n = s_0, \ldots, s_q = s' \setminus U_n$ such that $s_i$ is a central star permutation of $s_{i-1}$ for $i = 1, \ldots,q$. Attaching the $n$th stands, we get $s = (s \setminus V_n) \cup V_n = s_0 \cup V_n, \ldots, s_q \cup V_n = (s'\setminus U_n) \cup U_n = s'$. Thus, if we can show $V_n=U_n$ in the sense described above, then we have shown $s \sim s'$. 

To arrive at $V_n = U_n$, we will leave $U_n$ fixed and exchange the elements between the sets $V_n$ and $V_1, \ldots, V_{n-1}$ so that $V_n$ and $U_n$ match, where the exchange of elements is by central star permutations of $s$ so that the $n$th strands of $s'$ and a central star permutation of $s$ agree. Since $s$ and $s'$ encode the same partition of $n\x$, then $v_{i1} = u_{n1}$ for some $i$ since $V_i$ is written using the lexicographic order. Necessarily, $V_i \cup V_n$ is a central substar of $s$. Let $s_{1}$ be the star obtained from a central star permutation of $s$ by acting on $V_i \cup V_n$ by the permutation $(i\; n)$. Relabel the $V_i$ to correspond to the strands of $s_{1}$. Then $v_{n1}=u_{n1}$. Suppose, for some $t>1$, there is a sequence of central star permutations $s = s_0, \ldots, s_{t-1}$ such that, if the $V_i$ correspond to the strands of $s_{t-i}$, then $v_{nj} = u_{nj}$ for $j = 1, \ldots, t-1$. Since $s$ and $s'$ give the same partition, then $v_{ij} = u_{nt}$ for some $i,j$. Since $v_{n1} + \cdots + v_{n (t-1)} = u_{n1} + \cdots + u_{n (t-1)}$, then $v_{nt} + \cdots + v_{nk_n} = u_{nt} + \cdots + u_{n\ell_n}$. Since $v_{ij} = u_{nt}$, then $v_{ij} + \cdots + v_{ik_i} = u_{nt} + \cdots + u_{n\ell_n}$. 
Hence, $v_{nt} + \cdots + v_{nk_n} = v_{ij} + \cdots + v_{ik_i}$, so the vertices of $s_{t-1}$ corresponding to $\{v_{nt}, \ldots, v_{nk_n}\} \cup \{v_{ij}, \ldots, v_{ik_i}\}$ is a central substar of $s_{t-1}$. 
Acting on this central substar by $(i\; n)$ gives a new star $s_t$ and if we relabel the $V_i$ to correspond to the strands of $s_t$ then $v_{nj} = u_{nj}$ for $j = 1,\ldots,t$. Repeat these steps until $t=\ell_n$.
\end{proof}

\section{Multiplex Juggling Sequences} \label{sec: juggling}

In this section, we introduce and make a connection to objects that were first defined in \cite{Juggling} called  \textit{multiplex juggling sequences}. 
It is known that multiplex juggling sequences  count the number of $V_r$-combinations of $n{\bf x}$ \cite{Juggling}. Thus, there must be a way to connect $n$-stars (up to the relation induced by central star permutations) to  multiplex juggling sequences. Indeed, with some work, we can establish a correspondence, which we detail here.

Respecting the notation in \cite{Juggling}, vectors belonging to a juggling sequence are bold faced and, when describing the specific entries of a vector, angled brackets $\langle \cdot \rangle$ will be used. Furthermore, these vectors will ignore trailing zeros, for instance, the vector $\langle  3,0,1,0,0\rangle$ will be written as $\langle 3,0,1 \rangle$ instead.

\begin{definition}\label{def:mj sequence}
A \textit{multiplex juggling sequence} for $n$ balls is a tuple $S = ({\bf s_0}, {\bf s_1}, \ldots, {\bf s_r})$ where
\begin{enumerate}
    \item ${\bf s_i}$ is a nonnegative integer vector whose entries sum to $n$ for each $i = 0, 1, \ldots, r$, and
    \item if ${\bf s_i} = \langle s_1, \ldots, s_h \rangle$, then ${\bf s_{i+1}} = \langle s_2 + b_1, \ldots, s_h + b_{h-1}, b_h, \ldots, b_{h'} \rangle$ where the nonnegative integers $b_j$ satisfy $s_1 = \sum_{j=1}^{h'} b_j$.
\end{enumerate}

The vectors ${\bf s_0}$ and ${\bf s_r}$ are referred to as the \textit{initial} and \textit{final state} of the multiplex juggling sequence, respectively. A juggling sequence beginning and ending in the same state is said to be \textit{periodic}. In some instances, we invoke an integer parameter $m$, called the \textit{hand capacity}, to bound above the entries of the vectors in a juggling sequence. Note that if $m$ is larger than the entry sum of the vectors, then the hand capacity places no restriction on the juggling sequences.

The set of all multiplex juggling sequences of $n$ balls of length $r$ with initial state ${\bf a}$ and final state ${\bf b}$ is denoted JS$({\bf a},{\bf b},r)$  and its cardinality is denoted js$({\bf a},{\bf b},r)$; we note here that $n$ doesn't appear in the notation JS$({\bf a},{\bf b},r)$ or js$({\bf a},{\bf b},r)$ since $n$ can be inferred from either ${\bf a}$ or ${\bf b}$. The set of all multiplex juggling sequences of $n$ balls of length $r$ with initial state ${\bf a}$, final state ${\bf b}$, and hand capacity $m$ is denoted JS$({\bf a},{\bf b},r,m)$ and its cardinality is denoted js$({\bf a},{\bf b},r,m)$.
\end{definition}

\begin{example}
The sequence $S = (\langle 2 \rangle, \langle 1,1 \rangle, \langle 2 \rangle, \langle 1,1 \rangle, \langle 2 \rangle)$ is a juggling sequence belonging to the set JS$(\langle 2 \rangle, \langle 2 \rangle, 4)$. As Figure \ref{fig:jugseq1} depicts, a juggling sequence (a sequence of vectors) is drawn as a sequence of vertical conveyor belts (a conveyor for each vector). Each conveyor belt has buckets at discrete positions containing some number of balls. The $i$th entry of a vector encodes the number of balls in the $i$th bucket. From one time step to the next, every ball falls into the bucket immediately below except those in the bottom bucket, which are redistributed (these actions correspond to condition (2) in Definition \ref{def:mj sequence}).  In terms of juggling, we think about redistributing as throwing a ball upwards.

\begin{figure}[h]
    \centering
    \begin{tikzpicture}[scale = .5]
\foreach \m in {0,5,10,15,20}{
\draw (\m,0) -- (\m,4);
\foreach \n in {1,...,3}{
\draw (\m,\n) -- (\m+2,\n);
\draw (\m+2,\n) -- (\m+3,\n+1);
}
\filldraw[blue,fill=blue] (.5,1.5) circle (.2);
\filldraw[blue,fill=blue] (5.5,1.5) circle (.2);
\filldraw[blue,fill=blue] (10.5,1.5) circle (.2);
\filldraw[blue,fill=blue] (15.5,2.5) circle (.2);
\filldraw[blue,fill=blue] (20.5,1.5) circle (.2);
\filldraw[orange,fill=orange] (1.5,1.5) circle (.2);
\filldraw[orange,fill=orange] (5.5,2.5) circle (.2);
\filldraw[orange,fill=orange] (11.5,1.5) circle (.2);
\filldraw[orange,fill=orange] (15.5,1.5) circle (.2);
\filldraw[orange,fill=orange] (21.5,1.5) circle (.2);
}
\filldraw (1,-1) node {$\langle 2 \rangle$};
\filldraw (6,-1) node {$\langle 1,1 \rangle$};
\filldraw (11,-1) node {$\langle 2 \rangle$};
\filldraw (16,-1) node {$\langle 1,1 \rangle$};
\filldraw (21,-1) node {$\langle 2 \rangle$};
\end{tikzpicture}
    \caption{A pictorial representation of the juggling sequence $S = (\langle 2 \rangle, \langle 1,1 \rangle, \langle 2 \rangle, \langle 1,1 \rangle, \langle 2 \rangle)$.}
    \label{fig:jugseq1}
\end{figure}
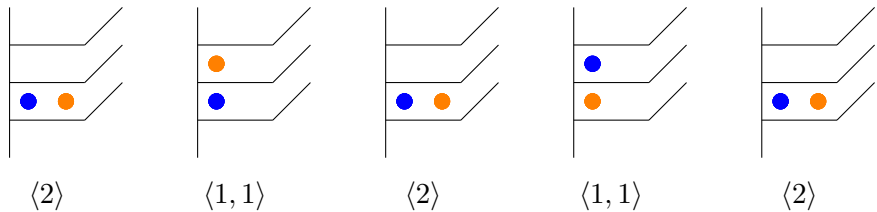

\end{example}

\begin{rmk} \label{rmk:indices}
We note that in \cite{Juggling}, Benedetti, Hanusa, Harris, Morales, and Simpson present a bijection between juggling sequences of length $r$ and Kostant's partition function in type $A_r$. Note that in this case the vectors being partitioned have length $r+1$. However, in what follows, given our convention to partition vectors of length $r$, we  give a connection between multiplex juggling sequences of length $r$ and partitions of vectors of the same length. This shift aligns the indices of the length of the juggling sequences with the length of the vectors in the partitions.
\end{rmk}

A juggling sequence of length $r$ beginning and ending in the state $\langle n \rangle$ encodes a partition of $n{\bf x}\in \mathbb{R}^{r}$ (Corollary 3.9, \cite{Juggling}). We can think of each such juggling sequence as being decomposed into $n$ juggling sequences of a single ball, namely, a juggling sequence beginning and ending in the state $\langle 1 \rangle$. A juggling sequence of a single ball encodes a partition of ${\bf x}$ by recording when the ball is thrown from the bottom bucket and how high the ball is thrown. For instance, the juggling sequence of the orange ball in Figure \ref{fig:jugseq1} is $(\langle 1 \rangle, \langle 0,1 \rangle, \langle 1 \rangle, \langle 1 \rangle, \langle 1 \rangle)$, so, pictorially, as shown in Figure~\ref{fig:orange and blue balls}, the orange ball starts in the bottom bucket, is thrown to height 2, falls down bucket, then at time step 2 is thrown to height~1 (the bottom bucket), and at time step 3 is thrown to height~1 again. Recall
$E_{i,j}=\sum_{k=i}^{j}e_k$. Following \cite{Juggling}, to obtain a partition of ${\bf x}$, let each throw of the ball correspond to a vector, in particular a throw at time $i$ to height $j$ corresponds to the vector $E_{i, i+j-1}$. Hence, the three throws of the orange ball encode the three vectors: $E_{1,1+2-1} = E_{1,2}$, $E_{3, 3+1-1} = E_3 = e_3$, and $E_{4,4+1-1} = E_4= e_4$. Thus, the trajectory of the orange ball encodes the partition $E_{1,2} + E_3 + E_4$, i.e. 12-3-4, which uses three vectors. Similarly, the juggling sequence of the blue ball is pictured in Figure \ref{fig:orange and blue balls}.  The juggling sequence of the blue ball encodes the partition $E_1 + E_2 + E_{34}$, i.e. 1-2-34. Combined, the juggling sequence of the blue and orange balls encode the following partition of $2{\bf x}$: $E_1 + E_2 + E_3 + E_4 + E_{12} + E_{34}$, i.e 1-12-3-34-4. Recall, the bracelet from Example~\ref{equivb4bracelets} encodes this same partition. This in fact is not a coincidence and we provide a bijection next.

\begin{figure}[h]
    \centering
    \begin{tikzpicture}[scale = .5]
\foreach \m in {0,5,10,15,20}{
\draw (\m,0) -- (\m,4);
\foreach \n in {1,...,3}{
\draw (\m,\n) -- (\m+2,\n);
\draw (\m+2,\n) -- (\m+3,\n+1);
}
\filldraw[orange,fill=orange] (.5,1.5) circle (.2);
\filldraw[orange,fill=orange] (10.5,1.5) circle (.2);
\filldraw[orange,fill=orange] (20.5,1.5) circle (.2);
\filldraw[orange,fill=orange] (5.5,2.5) circle (.2);
\filldraw[orange,fill=orange] (15.5,1.5) circle (.2);
}
\filldraw (1,-1) node {$\langle 1 \rangle$};
\filldraw (6,-1) node {$\langle 0,1 \rangle$};
\filldraw (11,-1) node {$\langle 1 \rangle$};
\filldraw (16,-1) node {$\langle 1 \rangle$};
\filldraw (21,-1) node {$\langle 1 \rangle$};

\tikzmath{\h=-7;}

\foreach \m in {0,5,10,15,20}{
\draw (\m,0+\h) -- (\m,4+\h);
\foreach \n in {1,...,3}{
\draw (\m,\n+\h) -- (\m+2,\n+\h);
\draw (\m+2,\n+\h) -- (\m+3,\n+1+\h);
}}
\filldraw[blue,fill=blue] (.5,1.5+\h) circle (.2);
\filldraw[blue,fill=blue] (5.5,1.5+\h) circle (.2);
\filldraw[blue,fill=blue] (10.5,1.5+\h) circle (.2);
\filldraw[blue,fill=blue] (15.5,2.5+\h) circle (.2);
\filldraw[blue,fill=blue] (20.5,1.5+\h) circle (.2);

\filldraw (1,-1+\h) node {$\langle 1 \rangle$};
\filldraw (6,-1+\h) node {$\langle 1 \rangle$};
\filldraw (11,-1+\h) node {$\langle 1 \rangle$};
\filldraw (16,-1+\h) node {$\langle 0,1 \rangle$};
\filldraw (21,-1+\h) node {$\langle 1 \rangle$};
\end{tikzpicture}
    \caption{The juggling sequence for the orange (above) and blue (below) balls from Figure \ref{fig:jugseq1}.}
    \label{fig:orange and blue balls}
\end{figure}
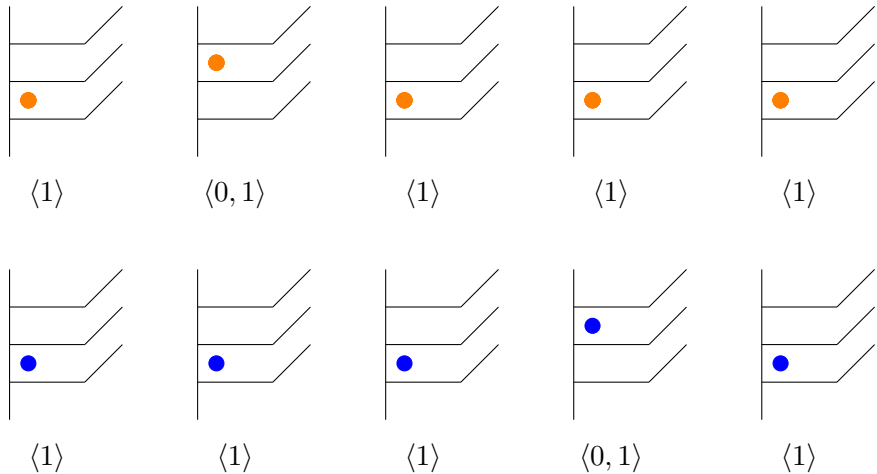

\subsection{Correspondence between juggling sequences and {\it n}-stars}

At this point, we can detail the correspondence between $n$-stars with $r-1$ unmarked vertices in each strand up to central star permutations and juggling sequences of length $r$ beginning and ending at the state $\langle n \rangle$, that is, a correspondence between the sets $\Star_n(r)/\sim$ and JS$(\langle n \rangle, \langle n \rangle, r)$. 
We know that such a correspondence exists since both encode a partition of $n{\bf x}\in\mathbb{R}^{r}$. Thus, an $n$-star up to $\sim$ corresponds to a juggling sequence in the sense that we can obtain the juggling sequence from the $n$-star by decoding its strands to get a partition of $n{\bf x}$, which can then be encoded into a juggling sequence. Since the correspondences from juggling sequences (of length $r$) with initial and final state $\langle n \rangle$ to partitions of $n{\bf x}$ and from $n$-stars up to $\sim$ to partitions of $n{\bf x}$ are bijective, then the decoding and encoding is unique and, hence, the correspondence from $n$-stars to juggling sequences with initial and final state $\langle n \rangle$ is indeed also bijective.

\begin{theorem}
If $n\geq 2$ and $r$ are positive integers, then js$(\langle n \rangle,\langle n \rangle,r) = |\Star_n(r)/\sim|$.
\end{theorem}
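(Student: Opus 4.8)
The plan is to deduce the identity by composing two bijections that both factor through the set of partitions of $n\x$, so that
\[
\mathrm{js}(\langle n\rangle,\langle n\rangle,r) \;=\; P(n\x) \;=\; |\Star_n(r)/\sim|.
\]
The right-hand equality is precisely Theorem~\ref{thrm:main}, which is already established, so the only genuine work is to justify the left-hand equality and then invoke transitivity. I would first state the two equalities as the backbone of the argument and reduce the theorem to them.

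For the left-hand equality I would argue that the assignment sending a length-$r$ multiplex juggling sequence with initial and final state $\langle n\rangle$ to a partition of $n\x$ is a bijection onto the set of $V_r$-combinations of $n\x$. Following the discussion preceding the statement, I would decompose such a juggling sequence into $n$ single-ball juggling sequences, each with initial and final state $\langle 1\rangle$, and then read each single-ball trajectory as a partition of $\x$: a throw at time $i$ to height $j$ is recorded as the vector $E_{i,i+j-1}\in V_r$, exactly as in \cite{Juggling}. Summing the $n$ resulting partitions of $\x$ yields a partition of $n\x$, and reversing the procedure recovers the juggling sequence uniquely; this is the content of \cite[Corollary~3.9]{Juggling}, reindexed according to Remark~\ref{rmk:indices} so that a sequence of length $r$ corresponds to a partition of a vector of length $r$.

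With both equalities in hand, transitivity gives $\mathrm{js}(\langle n\rangle,\langle n\rangle,r) = |\Star_n(r)/\sim|$, as desired. Composing the two bijections explicitly produces the correspondence already sketched before the statement: an $n$-star (up to $\sim$) decodes to a partition of $n\x$, which re-encodes to a juggling sequence, and this composite is a bijection because each factor is.

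The step I expect to require the most care is the left-hand equality, and specifically the bookkeeping of conventions rather than any deep idea. One must check that decomposing a multiplex sequence into single-ball sequences is consistent (the balls in the bottom bucket can be tracked independently because condition~(2) of Definition~\ref{def:mj sequence} redistributes them without interaction), that the single-ball encoding lands exactly in $V_r$ and covers every partition of $\x$, and that the index shift of Remark~\ref{rmk:indices} is applied uniformly so that the lengths match on both sides. Since Theorem~\ref{thrm:main} already supplies the harder equality $P(n\x)=|\Star_n(r)/\sim|$ through the delicate injectivity argument on central star permutations, no new combinatorial difficulty arises here beyond aligning the two known correspondences.
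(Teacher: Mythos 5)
Your proposal is correct and matches the paper's argument: the paper likewise obtains the identity by composing the bijection $\mathrm{JS}(\langle n\rangle,\langle n\rangle,r)\to\{\text{partitions of }n\x\}$ from \cite[Corollary 3.9]{Juggling} (reindexed as in Remark~\ref{rmk:indices}) with the bijection of Theorem~\ref{thrm:main}, so that both sides count $P(n\x)$. No further comparison is needed.
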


It is not necessary, however, to go through the partitions of $n{\bf x}$ in order to obtain a juggling sequence from an $n$-star, rather a more direct route exists. Let us detail this correspondence explicitly. Given an $n$-star, we can view each strand as encoding the juggling sequence of a single ball.  Specially, given a strand, add a single marked vertex and unmarked vertex, in that order, to the beginning of the strand, then the marked vertices of the modified strand encode the times when a throw occurs and the number of consecutive unmarked vertices between marked vertices encode the height the ball is thrown. Given any modified strand of this $n$-star, for the $i$th marked vertex, let $\ell_i$ be the number of unmarked vertices between the $i$th marked vertex and the $(i+1)$th marked vertex (or the end of the sequence if there are no remaining marked vertices).  Recall, the time and height of each throw  are all that is required to recover the juggling sequence for one ball. To go from a strand to a juggling sequence of a single ball, a throw is specified by the numbers $i$ and $\ell_i$: the $i$th maximal sequence of consecutive unmarked vertices encodes a throw to height $\ell_i$ at time step $0$ if $i=1$ or $\sum_{j=1}^{i-1} \ell_i$ if $i \neq 1$. From this, simply recover what remains of the juggling sequence for the modified strand by letting the ball fall to the bottom bucket:
\[
S = (\langle 1 \rangle,  {\bf e}_{\ell_1} ,  {\bf e}_{\ell_1-1}, \ldots, \langle 1 \rangle, {\bf e}_{\ell_2}, {\bf e}_{\ell_2-1}, \ldots, \langle 1 \rangle).
\]
The juggling sequence for the entire $n$-star is obtained by adding componentwise the sequences corresponding each strand. This concludes the exact correspondence between periodic juggling sequences of length $r$ with $n$ balls and $n$-stars with $r-1$ unmarked vertices in each strand. However, it is not immediately clear that this correspondence is well-defined with respect to the equivalence classes of $\Star_n(r)/\sim$. Thus, as a final remark, we  like to explain the consequences of central star permutations on the associated juggling sequence. 

A central star permutation exchanges inner strands of an $n$-star and effectively relabels the strands, since the strands of an $n$-star are labeled. To understand the corresponding action on a juggling sequence, color the balls in a juggling sequence; here, each color corresponds to a strand label in a $n$-star.  Thinking of a juggling sequence as one with colored balls (this is called a \textit{labeled juggling sequence}), a central star permutation translates to recoloring the balls when they are at the bottom bucket. Therefore, removing the colors of the balls from a labeled juggling sequence to transform into a unlabeled juggling has the same effect as identifying $n$-stars that are equivalent up to central star permutations.

\section*{Acknowledgements}
Elizabeth Gross was supported by the National Science Foundation (NSF), DMS-1945584. Pamela E.~Harris was supported by a Karen Uhlenbeck EDGE Fellowship.

\bibliographystyle{plain} 
\bibliography{references}{} 

\end{document}